\newtheorem{theorem}{Theorem}
\newtheorem{lemma}[theorem]{Lemma}
\newtheorem{definition}[theorem]{Definition}
\newtheorem{remark}[theorem]{Remark}
\newcommand{\I}{\mathbf{I}}
\newcommand{\0}{\mathbf{0}}
\title{A Data-Driven Convex Programming Approach to Worst-Case Robust Tracking Controller Design}
\author{Liang Xu*, Mustafa Sahin Turan*, Baiwei Guo, Giancarlo Ferrari-Trecate
\thanks{Authors are with the Institute of Mechanical Engineering (IGM),
		% \'Ecole Polytechnique F\'ed\'erale de Lausanne (EPFL)
  EPFL, Switzerland. Email: {\tt\small  \{liang.xu, mustafa.turan, baiwei.guo, giancarlo.ferraritrecate\}@epfl.ch}}
    
 \thanks{*The first two authors contributed equally to this work. This work has been supported by the Swiss National Science Foundation under the COFLEX project (grant number 200021\_169906) and the National Centre of Competence in Research (NCCR) in Dependable and Ubiquitous Automation.}}
\begin{document}

\maketitle 
\begin{abstract}
  This paper studies finite-horizon robust tracking control for discrete-time linear systems, based on input-output data.
  We leverage behavioral theory to represent system trajectories through a set of noiseless historical data, instead of using an explicit system model.
  By assuming that recent output data available to the controller  are affected by noise terms verifying a quadratic bound, we formulate an optimization problem with a linear cost and LMI constraints for solving the robust tracking problem without any approximations.
  Our approach hinges on a parameterization of noise trajectories compatible with the data-dependent system representation and on a reformulation of the tracking cost, which enables the application of the S-lemma.
  In addition, we propose a method for reducing the computational complexity and demonstrate that the size of the resulting LMIs does not scale with the number of historical data.
 Finally, we show that the proposed formulation can easily incorporate actuator disturbances as well as constraints on inputs and outputs.
  The performance of the new controllers is discussed through simulations.
\end{abstract}

\section{Introduction}\label{sec.Introduction}

        Due to the recent advances in sensing, communication, and computation, data availability for control design is steadily increasing.
        This has motivated a renewed interest in  system analysis and control design methods relying on finite-length data sequences~\cite{DePersis2020TAC, vanWaardeHenkJ2020TAC, MatniNikolai2019CDCLearningControlSurvey, StephenTu2019Thesis}.
        Several recent works propose to use raw measurements for representing discrete-time systems, and solving system analysis and control design problems~\cite{Mishra2020DataDrivenControllabilityTest,Koch2020DataVerifyDispassivity,Romer2019DataVerifyPassivity,DePersis2020TAC, vanWaardeHenkJ2020TAC, vanWaardeHenk2020ArXiv, HarryTrentelman2020ArXiv, berberich2020robust, BisoffiAndrea2020RobustInvariance, vanWaardeHenk2020MatrixSLemma, DePersis2020LowComplexity, CoulsonJeremy2019ECCDeePC, BerberichJulian2020DeeMPCStability, CoulsonJeremy2020DistributionallyRobustDeePC}.
      As mentioned in~\cite{DePersis2020TAC}, the main feature of these approaches is to bypass explicit system identification that is usually required in standard control design.
        Moreover, data-based system representations can be easier to update when new data are available \cite{alpago2020extended}, hence facilitating the deployment of adaptive control systems. 
        
        All above works assume the availability of \textit{historical data}, i.e., finite-length trajectories produced by the open-loop system and measured offline.
        The works~\cite{DePersis2020TAC, vanWaardeHenkJ2020TAC, vanWaardeHenk2020ArXiv, HarryTrentelman2020ArXiv, berberich2020robust, BisoffiAndrea2020RobustInvariance, vanWaardeHenk2020MatrixSLemma, DePersis2020LowComplexity}  consider system representations based on input-state historical data.
Data-based parameterizations of linear state-feedback and linear quadratic regulators are developed in~\cite{DePersis2020TAC}, under the assumption that historical input data are persistently exciting.
This assumption further implies that a state-space model of the system can be perfectly reconstructed for control design.
The persistence of excitation requirement is relaxed in~\cite{vanWaardeHenkJ2020TAC}, where the authors provide necessary and sufficient conditions about the informativity of historical data for testing system properties and building stabilizing control laws.
The informativity framework is further extended to sub-optimal control design in~\cite{vanWaardeHenk2020ArXiv}, as well as tracking and regulation problems in~\cite{HarryTrentelman2020ArXiv}.
The presence of noise in historical data, which prevents from unambiguously identifying the system dynamics, is considered in~\cite{DePersis2020LowComplexity, berberich2020robust, BisoffiAndrea2020RobustInvariance, vanWaardeHenk2020MatrixSLemma}.
\cite{DePersis2020LowComplexity} extends the optimal control method in \cite{DePersis2020TAC} to account for noisy data and derives sufficient conditions to ensure that the proposed method returns a stabilizing controller.
In~\cite{berberich2020robust, BisoffiAndrea2020RobustInvariance, vanWaardeHenk2020MatrixSLemma}, data are first used for representing all systems that are compatible with available prior knowledge on the noise and then for developing different kinds of state-feedback regulators, including robustly stabilizing, $\mathcal{H}_2$, and $\mathcal{H}_{\infty}$ controllers.

In certain applications, the system state is not accessible and only input-output data can be collected.
In this scenario, Willems' Fundamental Lemma states that the whole set of input-output trajectories generated by a discrete-time linear system can be represented by finitely many historical data coming from sufficiently excited dynamics~\cite{WillemsJan2005SCL}.
In view of this result,~\cite{MarkovskyIvan2008DataDrivSimCont} proposes to predict the system output from a given time $t$ onwards by using a set of collected historical data and a finite amount of \textit{recent past data}, i.e., an input-output trajectory measured right before time $t$. This approach is also used in the data-enabled predictive control (DeePC) scheme described in~\cite{CoulsonJeremy2019ECCDeePC}.
While originally developed for noiseless data, DeePC has been recently extended to noisy trajectories in~\cite{BerberichJulian2020DeeMPCStability, CoulsonJeremy2020DistributionallyRobustDeePC}.
In~\cite{BerberichJulian2020DeeMPCStability}, slack variables are introduced in the data-dependent system representation to account for  noisy measurements.
The modified control scheme is shown to be recursively feasible and practically exponentially stable; however, the tracking performance is not analyzed.
The authors in~\cite{CoulsonJeremy2020DistributionallyRobustDeePC} propose a distributionally robust variant of DeePC based on semi-infinite optimization.
They then formulate a finite and convex program, whose optimal value is an upper bound to that of the original optimization problem.
% However, the optimality gap between the convex program and the semi-infinite program is unknown.
The work~\cite{RN11435} considers using noiseless historical data and noisy recent output data to minimize the energy of the control input while robustly satisfying input/output constraints.
The authors propose to separate the problems of estimation of the initial condition and control design, and show that the solution to the formulated problem is computed by consecutively solving two optimization problems.
%This problem is solved by consecutively computing the solutions to optimal estimation and control problems.

In safety-critical applications, such as power networks and industrial control systems, it is sometimes required to adopt a bounded-error perspective by enforcing robustness against all possible noise realizations and providing worst-case performance guarantees.
This is the setting considered in the present paper and, for this purpose, we utilize the data-driven prediction method in~\cite{MarkovskyIvan2008DataDrivSimCont}.
We assume the historical data are noiseless while recent data are corrupted by noise terms satisfying a quadratic constraint similar to the one in~\cite{vanWaardeHenk2020MatrixSLemma}.
This assumption corresponds to scenarios where one can utilize very accurate (and, thus, expensive) sensors to collect offline historical data, but only has relatively inaccurate and noisy sensors to be used during online operations.
Our goal is to provide a control design method for worst-case optimal reference tracking with explicit performance guarantees.

We first characterize noises that are consistent with the input-output data, and then reformulate the tracking cost.
This enables us to apply the S-lemma~\cite{polik2007survey} to transform the worst-case robust control problem  to an equivalent minimization problem with a linear cost and LMI constraints.
Moreover, we propose a method for reducing the size of LMI constraints, and also show that our formulation can easily incorporate input-output constraints as well as actuator disturbances.
In contrast to~\cite{RN11435}, we consider to minimize quadratic cost on both inputs and outputs, while the method in~\cite{RN11435} only deals with the minimization of the input energy.
The main features of our method are the following: (1) we consider the minimization of the worst-case tracking performance; (2) the proposed design procedure is non-conservative, meaning that we obtain the optimal tracking controllers without any approximations; (3) the complexity of the controller design procedure does not increase with the number of historical data.
To the authors' knowledge, this is the first time that the worst-case robust optimal tracking control is considered and exactly solved in a data-driven fashion.

A preliminary version of this work has been submitted to a conference~\cite{RN11384}.
With respect to it, this paper provides complete proofs for all intermediate results, contains new results on reducing the  LMI constraint size, illustrates how to consider input-output constraints as well as actuator disturbances, and adds new numerical experiments.
This paper is organized as follows. In Section~\ref{sec.PreliminariesDataDriven}, we provide preliminaries on data-driven prediction.
The problem formulation is given in Section~\ref{sec.ProblemFormulation}.
The data-based robust optimal tracking control problem is solved in Section~\ref{sec.RobustControlDesign}.
Extensions for considering input-output constraints as well as actuator disturbances are discussed in Section~\ref{sec:Extensions}.
Simulations are provided in Section~\ref{subsec.Simulation}.
Concluding remarks are given in Section~\ref{sec.Conclusion}.

\textbf{Notation}:
% $x^\top$ represents the transpose of the matrix $x$.
$\mathrm{col}(\{x_k\}_{k=i}^j)$ denotes the column concatenation of the vectors $x_k$ in the sequence $\{x_k\}_{k=i}^j$. 
For a square matrix $\Phi$, $\Phi> 0$ $(\Phi\ge 0)$ represents that it is positive definite (semidefinite).
%Inequalities are meant element-wise for vectors. 
For $Q\geq0$, $\|x\|_Q$ denotes $\sqrt{x^\top Qx}$.
% $\mathrm{Ker}(\Phi)$ represents the kernel space of $\Phi$.
$\mathrm{dim}(V)$ denotes the dimension of the vector space $V$.
For a matrix $A \in \mathbb{R}^{n\times m}$, $\ker(A)$ and $\mathrm{range}(A)$ denote its null space and column space, respectively. 
Moreover, $\mathcal{N}(A)\in \mathbb{R}^{m\times \dim(\ker(A))}$ and $\mathcal{R}(A)\in \mathbb{R}^{n\times \mathrm{rank}(A)}$ denote matrices whose columns form a basis for $\mathrm{ker}(A), \mathrm{range}(A)$, respectively.
$\I$ and $\0$ denote identity and zero matrices of suitable size.
When used with subspaces, the operator $+$ denotes the subspace sum.
The operator $\otimes$ denotes the Kronecker product. 

\section{Preliminaries on Data-Driven Prediction}\label{sec.PreliminariesDataDriven}

We consider a controllable and observable discrete-time LTI system $\mathcal{G}$ with state-space representation
\begin{equation}\label{eq.LTI_system_ss} 
\begin{aligned}
x_{k+1} &=A x_{k}+B u_{k}, \\
y_{k} &=C x_{k}+D u_{k},
\end{aligned}
\end{equation}
where $x_k\in \mathbb{R}^n$, $u_k\in \mathbb{R}^m$, $y_k\in \mathbb{R}^p$ are the  system state, input and output, respectively.
In this paper, we assume that system matrices $(A, B, C, D)$ are unknown, the states $x_k$ are not measurable, and only a finite set of input-output samples of $\mathcal{G}$ is available.
In this section, we recall how to form a data-based representation of $\mathcal{G}$ that allows for predicting the output given any input~\cite{WillemsJan2005SCL, MarkovskyIvan2008DataDrivSimCont}.

We start by introducing the following definitions.
The \textit{lag} $\mathbf{l}(\mathcal{G})$ is the smallest integer $l$ such that the $l$-step observability matrix $[C^\top,A^\top C^\top,\dots,(A^{l-1})^\top C^\top]^\top$ has full column rank.
Moreover, $\mathbf{l}(\mathcal{G})\leq n$ since $\mathcal{G}$ is observable.
A sequence ${\{u_{k}, y_{k}\}}_{k=l}^{l+T-1}$ is a \textit{trajectory} of $\mathcal{G}$ if and only if there exists a state sequence ${\{x_{k}\}}_{k=l}^{l+T}$ such that~\eqref{eq.LTI_system_ss} holds for $k=l, \ldots, l+T-1$.
For a sequence ${\{v_k\}}_{k=i}^j$ of vectors, we use $v$ to denote $\mathrm{col}({\{v_k\}}_{k=i}^j)$ when the start and end times $i, j$ are clear from the context.
The \textit{Hankel matrix of depth} $L$ corresponding to a sequence ${\{v_k\}}_{k=l}^{l+T-1}$ is defined as
\begin{align*}
\mathcal{H}_{L}(v):=\left[\begin{array}{cccc}v_{l} & v_{l+1} & \cdots & v_{l+T-L} \\ v_{l+1} & v_{l+2} & \cdots & v_{l+T-L+1} \\ \vdots & \vdots & \ddots & \vdots \\ v_{l+L-1} & v_{l+L} & \cdots & v_{l+T-1}\end{array}\right].
\end{align*}
The sequence ${\{v_{k}\}}_{k=l}^{l+T-1}$ is \textit{persistently exciting of order} $L$ if the  Hankel matrix $\mathcal{H}_{L}(v)$ is of full row rank.

In the following, we introduce the input-output data-based representation of linear systems in~\cite{WillemsJan2005SCL} and the  prediction method in~\cite{MarkovskyIvan2008DataDrivSimCont}.
Suppose a trajectory ${\{\bar{u}_{k}, \bar{y}_{k}\}}_{k=t_h}^{t_h+T_d-1}$ of $\mathcal{G}$ is collected, where  $t_h\ll 0$.
The trajectory is called \textit{historical}, since it can be regarded as collected long before the start (indicated by time $0$) of any control or prediction tasks.
The Fundamental Lemma proposed by Willems et al.~\cite{WillemsJan2005SCL} shows how to use the historical trajectory  to characterize all possible system trajectories of length $T_f$.
 \begin{lemma}[Fundamental Lemma~\cite{WillemsJan2005SCL}]\label{lem.WillemFundamentalLemma}
   Suppose that ${\{\bar{u}_{k}, \bar{y}_{k}\}}_{k=t_h}^{t_h+T_d-1}$ is a trajectory of $\mathcal{G}$ and that the input $\bar{u}$ is persistently exciting of order $T_f+n$.
   Then, ${\{u_{k}, y_{k}\}}_{k=0}^{T_f-1}$ is a trajectory of $\mathcal{G}$ if and only if there exists $g \in \mathbb{R}^{T_d-T_f+1}$ such that
   \begin{align}\label{eq.WillemsLemma}
     \left[\begin{array}{c}
             \mathcal{H}_{T_f}(\bar{u}) \\
             \mathcal{H}_{T_f}(\bar{y})
           \end{array}\right]g =\left[\begin{array}{l}
                                        u \\
                                        y
                                      \end{array}\right].
   \end{align}
\end{lemma}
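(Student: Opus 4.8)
The plan is to prove the two implications separately. For the ``if'' direction I would simply note that every column of $\mathrm{col}(\mathcal{H}_{T_f}(\bar u),\mathcal{H}_{T_f}(\bar y))$ is the stacked length-$T_f$ window $\{\bar u_k,\bar y_k\}_{k=t_h+j}^{t_h+j+T_f-1}$ of the historical trajectory, hence itself a trajectory of $\mathcal{G}$ by time-invariance. Since the set $\mathcal{B}_{T_f}\subseteq\mathbb{R}^{(m+p)T_f}$ of stacked length-$T_f$ trajectories is a linear subspace, every $\mathrm{col}(u,y)$ arising from a $g$ satisfying \eqref{eq.WillemsLemma} lies in $\mathcal{B}_{T_f}$ and is therefore a trajectory of $\mathcal{G}$.

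For the ``only if'' direction, the previous observation already gives $\mathrm{range}(\mathrm{col}(\mathcal{H}_{T_f}(\bar u),\mathcal{H}_{T_f}(\bar y)))\subseteq\mathcal{B}_{T_f}$, so I only need the reverse inclusion. Since $\{\bar u_k,\bar y_k\}$ is a trajectory, there is a state sequence $\{\bar x_k\}$ obeying \eqref{eq.LTI_system_ss}; collecting the window-initial states into $X_0:=[\bar x_{t_h}\ \cdots\ \bar x_{t_h+T_d-T_f}]$ and unrolling the recursion \eqref{eq.LTI_system_ss} over each window of length $T_f$ yields the factorization
\begin{equation*}
\begin{bmatrix}\mathcal{H}_{T_f}(\bar u)\\ \mathcal{H}_{T_f}(\bar y)\end{bmatrix}
=\begin{bmatrix}\I&\0\\ \Gamma&\Omega\end{bmatrix}
\begin{bmatrix}\mathcal{H}_{T_f}(\bar u)\\ X_0\end{bmatrix},
\end{equation*}
with $\Omega=\mathrm{col}(\{CA^i\}_{i=0}^{T_f-1})$ the depth-$T_f$ observability matrix and $\Gamma$ the block-Toeplitz matrix of Markov parameters mapping an input window to its forced output response. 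The range of the left factor is exactly $\mathcal{B}_{T_f}$, since $\mathrm{col}(u,y)$ is a length-$T_f$ trajectory if and only if $y=\Gamma u+\Omega x_0$ for some $x_0\in\mathbb{R}^n$. Hence it would suffice to prove that $N:=\mathrm{col}(\mathcal{H}_{T_f}(\bar u),X_0)$ has full row rank $mT_f+n$: its columns would then span $\mathbb{R}^{mT_f+n}$, so the product would inherit the full range $\mathcal{B}_{T_f}$ of the left factor, which closes the argument.

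The main obstacle is establishing that $N$ has full row rank, and this is the only step requiring controllability of $\mathcal{G}$ and persistence of excitation of $\bar u$ of order $T_f+n$. I would argue by contradiction: suppose $\mathrm{col}(a,b)\neq\0$ left-annihilates $N$, so that $\sum_{i=0}^{T_f-1}a_i^\top\bar u_{t_h+j+i}+b^\top\bar x_{t_h+j}=0$ for every admissible $j$. By Cayley--Hamilton, the state trajectory satisfies an order-$n$ autoregressive identity whose forcing term is a fixed linear combination of $n$ consecutive inputs; substituting this identity together with the annihilator relation turns the equation into a nontrivial linear combination of the rows of $\mathcal{H}_{T_f+n}(\bar u)$ that vanishes identically in $j$. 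Matching coefficients at the highest input indices and descending forces $a=0$; the residual coefficients then give $b^\top A^iB=0$ for $i=0,\dots,n-1$, i.e.\ $b$ annihilates the $n$-step reachability matrix, so $b=0$ by controllability, contradicting $\mathrm{col}(a,b)\neq\0$. I expect the index bookkeeping in this substitution argument — and invoking persistence of excitation at the exact order $T_f+n$ and controllability at the right moment — to be the delicate part; everything else is routine linear algebra.
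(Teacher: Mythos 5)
The paper never proves Lemma~\ref{lem.WillemFundamentalLemma}: it is imported verbatim from~\cite{WillemsJan2005SCL}, so there is no in-paper argument to compare against; your proposal is, in substance, the standard state-space proof of the Fundamental Lemma, and its structure is sound. The ``if'' direction via time-invariance plus linearity of the set of length-$T_f$ trajectories is fine, and the ``only if'' direction correctly reduces to showing that $N=\mathrm{col}(\mathcal{H}_{T_f}(\bar u),X_0)$ has full row rank $mT_f+n$, since the range of the left factor $\left[\begin{smallmatrix}\I&\0\\ \Gamma&\Omega\end{smallmatrix}\right]$ is exactly the set of stacked length-$T_f$ trajectories and a surjective right factor preserves that range. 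The rank statement is precisely the key lemma of~\cite{WillemsJan2005SCL}, and your Cayley--Hamilton elimination is the standard way to prove it: combining the annihilator identities at shifts $j,\dots,j+n$ with the coefficients $p_0,\dots,p_{n-1},p_n=1$ of the characteristic polynomial kills the state and leaves a fixed row vector $c^\top$ with $c^\top\mathcal{H}_{T_f+n}(\bar u)=\0^\top$ over exactly the $T_d-T_f-n+1$ admissible columns; persistency of excitation of order $T_f+n$ forces $c=\0$ (so the combination is in fact trivial --- your phrase ``nontrivial linear combination'' should be dropped, since its forced triviality is exactly what PE delivers); the entries of $c$ at block indices $s\ge n$ are the convolution of $p$ with $a$, whose vanishing forces $a=\0$ by a leading-coefficient argument (cleanest via the polynomial identity relating $p$, $a$ and $c$, which sidesteps the case bookkeeping you flag), and the remaining entries then read $b^\top\sum_{k=s+1}^{n}p_kA^{k-1-s}B=0$ for $s=n-1,\dots,0$, which descending gives $b^\top A^iB=0$ for $i=0,\dots,n-1$ and hence $b=\0$ by controllability, the desired contradiction. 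Two minor remarks: observability plays no role in this rank argument (it is used elsewhere in the paper, e.g.\ for Lemma~\ref{lemma.UniqueOutput}), and your count of shifts matches the column count of $\mathcal{H}_{T_f+n}(\bar u)$ exactly, so PE is invoked at the right order.
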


For a given time $t\ge 0$, consider the problem of using~\eqref{eq.WillemsLemma} for computing predictions $y=\mathrm{col}(\{y_k\}_{k=0}^{T_f-1})$ of the system output over a future horizon given the inputs $u=\mathrm{col}(\{u_k\}_{k=0}^{T_f-1})$. There are infinitely many output trajectories $y$ that satisfy~\eqref{eq.WillemsLemma}, corresponding to different initial states $x_0$.
The authors of~\cite{MarkovskyIvan2008DataDrivSimCont} propose to implicitly fix the initial state by using recent input-output samples\footnote{We call $\{u_{\mathrm{ini}}, y_{\mathrm{ini}}\}$ \textit{recent data}.} $u_{\mathrm{ini}}=\mathrm{col}(\{u_k\}_{k=-T_{\mathrm{ini}}}^{-1})$, $y_{\mathrm{ini}}=\mathrm{col}(\{y_k\}_{k=-T_{\mathrm{ini}}}^{-1})$, which are available at time $0$ (see Fig.~\ref{fig:data_driven_prediction}).
More precisely, let %$u=\mathrm{col}(u_{\mathrm{ini}}, u), y=\mathrm{col}(y_{\mathrm{ini}}, y)$ and
% Lemma~\ref{lem.WillemFundamentalLemma} cannot be used for predicting the output for a given input sequence.
% Because for a given $u$, there are infinitely many $y$ that satisfy~\eqref{eq.WillemsLemma}, where different $y$ correspond to different initial states $x_0$.
% The authors in~\cite{MarkovskyIvan2008DataDrivSimCont} propose to use an initial trajectory $\{u_k,y_k\}_{k=-T_\mathrm{ini}}^{-1}$ right before the time $k=0$ to implicitly specify the initial state $x_0$, therefore allowing for uniquely determining $y$ for a given $u$.
% Denote the column concatenations of $\{u_k\}_{k=-T_{\mathrm{ini}}}^{-1}, \{y_k\}_{k=-T_{\mathrm{ini}}}^{-1}$  as $u_\mathrm{ini}$ and $y_\mathrm{ini}$, respectively.
% $\{u_\mathrm{ini},y_\mathrm{ini}\}$ are refereed to as  \textit{recent} data, since they are collected later than the \textit{historical} data $\{\bar{u}, \bar{y}\}$.
% Construct the matrices $U_p, U_f, Y_f, Y_f$ as follows
\begin{equation*}
	U = \begin{bmatrix}
	U_p \\
	U_f
	\end{bmatrix} \triangleq \mathcal{H}_{T_{\mathrm{ini}}+T_f}\left(\bar{u}\right), \quad Y=\begin{bmatrix}
	Y_p \\
	Y_f
	\end{bmatrix}\triangleq \mathcal{H}_{T_{\mathrm{ini}}+T_f}\left(\bar{y}\right),
\end{equation*}
where $U_p$ and $Y_p$ consist of the first $T_\mathrm{ini}$ block rows of $U$ and $Y$, respectively; while $U_f$ and $Y_f$ consist of the last $T_f$ block rows of $U$ and $Y$, respectively.
The following lemma summarizes the prediction algorithm.
\begin{lemma}[\cite{MarkovskyIvan2008DataDrivSimCont}]\label{lemma.UniqueOutput}
Suppose that $\bar{u}$ is persistently exciting of order $T_{\mathrm {ini}}+T_f+n$, and  $T_{\mathrm {ini}} \geq \mathbf{l}(\mathcal{G})$.
Then, for a given recent system trajectory $(u_\mathrm{ini},y_\mathrm{ini})$ and the input sequence $u$,
\begin{enumerate}
\item there exists at least one vector $g$ verifying
  \begin{align} \label{eq.DataDriveSimControl1}
  \left[\begin{array}{l}
          U_p \\
          Y_p \\
          U_f 
        \end{array}\right] g=\left[\begin{array}{c}
                                     u_{\mathrm{ini}} \\
                                     y_{\mathrm {ini}} \\
                                     u
                                   \end{array}\right],
  \end{align}
\item  the output prediction $y$ is unique and given by
  \begin{align} \label{eq.DataDriveSimControl2}
y=Y_fg,
  \end{align}
  for any $g$ satisfying~\eqref{eq.DataDriveSimControl1}.
\end{enumerate}
\end{lemma}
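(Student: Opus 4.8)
The plan is to reduce both claims to the Fundamental Lemma (Lemma~\ref{lem.WillemFundamentalLemma}), applied with the \emph{extended} horizon $T_{\mathrm{ini}}+T_f$ in place of $T_f$; this is legitimate precisely because $\bar u$ is persistently exciting of order $(T_{\mathrm{ini}}+T_f)+n$. Throughout I use that $\mathcal{H}_{T_{\mathrm{ini}}+T_f}(\bar u)$ and $\mathcal{H}_{T_{\mathrm{ini}}+T_f}(\bar y)$ are, up to a harmless permutation of block rows, the stacks $\mathrm{col}(U_p,U_f)$ and $\mathrm{col}(Y_p,Y_f)$. Note that persistence of excitation is what we need to invoke the Fundamental Lemma, while the condition $T_{\mathrm{ini}}\ge\mathbf l(\mathcal G)$ will only enter in the uniqueness argument.

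For existence (item~1), I start from the hypothesis that $(u_{\mathrm{ini}},y_{\mathrm{ini}})$ is a genuine trajectory of $\mathcal G$, hence admits a state sequence $\{x_k\}_{k=-T_{\mathrm{ini}}}^{0}$. Propagating~\eqref{eq.LTI_system_ss} forward from $x_0$ with the prescribed inputs $\{u_k\}_{k=0}^{T_f-1}$ generates outputs $\{y_k\}_{k=0}^{T_f-1}$, so the concatenation $\{u_k,y_k\}_{k=-T_{\mathrm{ini}}}^{T_f-1}$ is itself a trajectory of $\mathcal G$, now of length $T_{\mathrm{ini}}+T_f$. The Fundamental Lemma then provides $g$ with $\mathrm{col}(U_p,U_f,Y_p,Y_f)\,g=\mathrm{col}(u_{\mathrm{ini}},u,y_{\mathrm{ini}},y)$, and the first three block rows are exactly~\eqref{eq.DataDriveSimControl1}.

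For uniqueness (item~2), let $g_1,g_2$ both satisfy~\eqref{eq.DataDriveSimControl1} and set $\delta:=g_1-g_2$, so $U_p\delta=0$, $Y_p\delta=0$, $U_f\delta=0$; the goal is $Y_f\delta=0$. The pair $\bigl(\mathrm{col}(U_p,U_f)\delta,\ \mathrm{col}(Y_p,Y_f)\delta\bigr)=\bigl(0,\ \mathrm{col}(0,Y_f\delta)\bigr)$ lies in the column span of the stacked Hankel matrix, so by the Fundamental Lemma it is a trajectory of $\mathcal G$ with some state sequence $\{\tilde x_k\}_{k=-T_{\mathrm{ini}}}^{T_f}$ obeying $\tilde x_{k+1}=A\tilde x_k$ and $\tilde y_k=C\tilde x_k$ (zero input). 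Since $\tilde y_k=0$ for $k=-T_{\mathrm{ini}},\dots,-1$, we get $CA^{j}\tilde x_{-T_{\mathrm{ini}}}=0$ for $j=0,\dots,T_{\mathrm{ini}}-1$, i.e.\ $\tilde x_{-T_{\mathrm{ini}}}$ lies in the kernel of the $T_{\mathrm{ini}}$-step observability matrix. As $T_{\mathrm{ini}}\ge\mathbf l(\mathcal G)$, that matrix has full column rank, forcing $\tilde x_{-T_{\mathrm{ini}}}=0$, hence $\tilde x_k\equiv0$ and $Y_f\delta=\mathrm{col}(\{\tilde y_k\}_{k=0}^{T_f-1})=0$. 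Combined with item~1, $y:=Y_fg$ is therefore a well-defined output, independent of the choice of $g$.

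The main obstacle is the uniqueness step: one must recognize $\delta$ as generating a \emph{zero-input} trajectory whose past output window vanishes, and then correctly invoke the definition of the lag to conclude that a window of length $T_{\mathrm{ini}}\ge\mathbf l(\mathcal G)$ already pins the initial state to zero. A minor point worth stating carefully is that only the ``easy'' direction of the Fundamental Lemma is used here — every vector in the range of the stacked Hankel matrix is a bona fide trajectory, equipped with a state sequence — and that permuting block rows affects none of the rank or solvability statements.
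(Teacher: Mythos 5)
Your proof is correct. Note that the paper itself does not prove Lemma~\ref{lemma.UniqueOutput} --- it is quoted from~\cite{MarkovskyIvan2008DataDrivSimCont} without proof --- so there is no in-paper argument to compare against; your reasoning is essentially the standard one from that reference: existence by applying Lemma~\ref{lem.WillemFundamentalLemma} to the concatenated length-$(T_{\mathrm{ini}}+T_f)$ trajectory (which is exactly why persistent excitation of order $T_{\mathrm{ini}}+T_f+n$ is assumed), and uniqueness by observing that the difference of two solutions generates a zero-input trajectory whose first $T_{\mathrm{ini}}$ outputs vanish, so that $T_{\mathrm{ini}}\ge\mathbf{l}(\mathcal{G})$ forces the corresponding state, and hence $Y_f\delta$, to be zero.
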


Note that, collectively~\eqref{eq.DataDriveSimControl1} and~\eqref{eq.DataDriveSimControl2} are equivalent to~\eqref{eq.WillemsLemma}, which can be rewritten as
\begin{align*}
  \left[\begin{array}{l}
          U_p \\
          Y_p \\
          U_f \\
          Y_f
        \end{array}\right] g=\left[\begin{array}{c}
                                     u_{\mathrm{ini}} \\
                                     y_{\mathrm {ini}} \\
                                     u\\
                                     y
                                   \end{array}\right].
\end{align*}

%  In view of Lemma~\ref{lemma.UniqueOutput}, there is a unique $y$ for a given recent trajectory $(u_\mathrm{ini},y_\mathrm{ini})$ and input $u$.
%  Therefore, even though multiple $g$ verifying~\eqref{eq.DataDriveSimControl1} might exist, they are equivalent in the sense that they yield the same $y$.
  Throughout the paper, we assume that $\bar{u}$ and $T_\mathrm{ini}$ verify the conditions in Lemma~\ref{lemma.UniqueOutput}, which implies that the matrices $U$, $U_p$ and $U_f$ have full row rank.
\begin{figure}[t]
    \centering
    \includegraphics[width = 8cm]{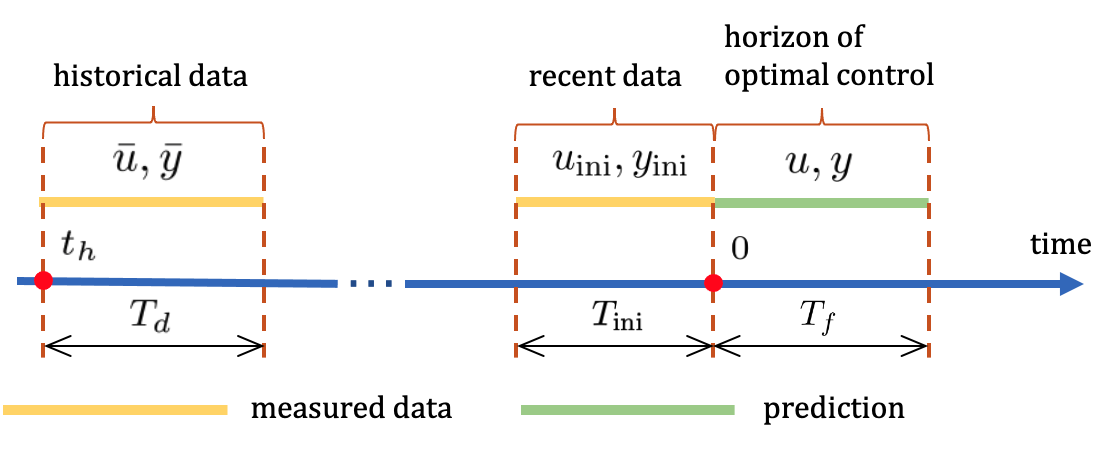}
    \caption{Chronological order of data used in data-driven prediction}
    \label{fig:data_driven_prediction}
\end{figure}  

\section{Problem Formulation}\label{sec.ProblemFormulation}

In view of the prediction algorithm described in Lemma~\ref{lemma.UniqueOutput}, consider the following data-driven linear-quadratic tracking problem
\begin{gather}
\min_{u, y, g}\sum_{k=0}^{T_f-1}\left(\left\|y_{k}-r_{k}\right\|_{Q}^{2}+\left\|u_{k}\right\|_{R}^{2}\right) \label{eq.yini-performanceconstraint-noiseless}~\text{s.t.} ~ \eqref{eq.DataDriveSimControl1},\eqref{eq.DataDriveSimControl2}
%\begin{bmatrix}
%U_p \\
%Y_p \\
%U_f \\
%Y_f
%\end{bmatrix} g=\begin{bmatrix}
%u_{\mathrm{ini}} \\
%y_{\mathrm{ini}} \\
%u \\
%y
%\end{bmatrix}, \label{eq.noiseless-model}
\end{gather}
where $\{r_k\}_{k=0}^{T_f-1}$ is the tracking reference and $Q\geq 0$, $R>0$ are weight matrices.
In this paper, we assume that the noisy recent outputs $y_{\mathrm{ini}}$ verify
\begin{align*}
y_{\mathrm{ini}}=\check{y}_{\mathrm{ini}}+w,
\end{align*}
where $\check{y}_{\mathrm{ini}}$ represents the noiseless output and $w$ denotes the measurement noise.
Besides, as in~\cite{vanWaardeHenk2020MatrixSLemma} and~\cite{berberich2020robust}, we assume that $w$ satisfies the following quadratic constraint 
\begin{align}
  \begin{bmatrix}
    1\\
    w
  \end{bmatrix}^\top
  \underbrace{\begin{bmatrix}
    \Phi_{11} & \Phi_{12}\\
    \Phi_{12}^\top & \Phi_{22}
  \end{bmatrix}}_{\Phi}
                 \begin{bmatrix}
                   1\\
                   w
                 \end{bmatrix}
  \ge 0, \label{eq.yini-noiseconstraint}
\end{align}
where $\Phi_{22}=\Phi_{22}^\top< 0$.

\begin{remark}\label{rem.wbounded}
The negative definiteness of $\Phi_{22}$ ensures that $\|w\|_2$ is bounded.
  In the special case that $\Phi_{12}=\0$ and $\Phi_{22}=-\I$,~\eqref{eq.yini-noiseconstraint} reduces to
  \begin{align*}
    w^\top w=\sum_{k}w_k^\top w_k\le \Phi_{11},
  \end{align*}
  which, as highlighted in~\cite{vanWaardeHenk2020MatrixSLemma}, has the interpretation of bounded noise energy.\texttt{}
\end{remark}
We are interested in designing a control input $u$ that minimizes the worst-case quadratic tracking error among all feasible noise trajectories, which are defined as follows.
\begin{definition}
	For recent data $(u_{\mathrm{ini}}, y_{\mathrm{ini}})$, a noise trajectory $w$ is called feasible if it verifies~\eqref{eq.yini-noiseconstraint} and $(u_{\mathrm{ini}}, y_{\mathrm{ini}}-w)$ is a trajectory of $\mathcal{G}$.
\end{definition}
Next, we provide a robust formulation of the tracking problem~\eqref{eq.yini-performanceconstraint-noiseless} based on the linear quadratic tracking error
% It should be noted that not all noise trajectories satisfying the constraint~\eqref{eq.yini-noiseconstraint} are feasible.
% Since $w$ is the measurement noise associated with a system trajectory $\{u_{\mathrm{ini}}, \check{y}_{\mathrm{ini}}\}$, 
% only those noise trajectories $w$ that satisfying~\eqref{eq.yini-noiseconstraint} and  making $(u_{\mathrm{ini}}, y_{\mathrm{ini}}-w)$ a trajectory of $\mathcal{G}$ are \textit{feasible}.
% Based on the notion of feasible noise trajectories, the robust optimal tracking control problem is formulated as follows, where the linear quadratic tracking error is defined as 
$$\text{ LQTE}(u,y,w)\triangleq \sum_{k=0}^{T_f-1}\left(\left\|y_{k}-r_{k}\right\|_{Q}^{2}+\left\|u_{k}\right\|_{R}^{2}\right).$$

\noindent \textbf{Problem P1}: Find the input sequence $u$ solving
\begin{subequations}
  \label{eq.optP1}
  \begin{align}
  \min_{u, y, g} \max_{w} & \quad \text{ LQTE}(u,y,w)\label{eq.yini-performanceobjective}\\
    \text{subject to} &\quad 
      \begin{bmatrix}
        U_p \\
        Y_p \\
        U_f \\
        Y_f
      \end{bmatrix} g=\begin{bmatrix}
        u_{\mathrm{ini}} \\
        y_{\mathrm{ini}} \\
        u \\
        y
      \end{bmatrix}-\begin{bmatrix}
        \0 \\
        w \\
        \0 \\
        \0
      \end{bmatrix}, \label{eq.yini-model-minmax}\\
&\quad       w \text{ is a feasible noise trajectory} \label{eq.yini-noiseconstraint-minmax}.
\end{align}
\end{subequations}

The constraint~\eqref{eq.yini-noiseconstraint-minmax} makes the min-max optimization problem difficult to solve.
%Moreover, due to the existence of the implicit constraint~\eqref{eq.yini-noiseconstraint-minmax}, conventional methods such as first identify system parameters $(A, B, C, D)$ and then conduct model-based robust control design are not simpler than the proposed method in this paper.
% In the next section, we explicitly parameterize the \textit{feasible} noise trajectories and solve the min-max optimization \textbf{P1}.
However, as we show in the next section, this issue can be circumvented by using a suitable parameterization of feasible noise trajectories.

\section{Robust Controller Design}\label{sec.RobustControlDesign}
Problem \textbf{P1} can be reformulated as follows
\begin{equation}\label{eq.minimization_problem}
  \begin{gathered}
    \min_{u, \gamma, g, y} \quad \gamma \\
    \text{s.t.}, \text{LQTE}(u,y,w)\leq \gamma,~ \forall w
    \text{ satisfying }~\eqref{eq.yini-noiseconstraint-minmax}.
  \end{gathered}
\end{equation}
For notational simplicity, we have omitted the dependence of the problem on $r$.
In the sequel, we will derive a tractable reformulation of~\eqref{eq.minimization_problem}.
We first show in Section~\ref{subsec.NoiseConstTransform} that any noise trajectory $w$ fulfilling~\eqref{eq.yini-noiseconstraint-minmax} can be expressed as an affine function of a vector $g_w$ satisfying a quadratic constraint.
In Section~\ref{subsec.PerformanceTransformation}, we show that the output $y$ is completely determined by the input $u$ and the vector $g_w$, which allows one to express the tracking error constraint as a quadratic constraint on $g_w$.
In light of these results, in Section~\ref{subsec.MainResult}, we prove that~\eqref{eq.minimization_problem} is equivalent to a minimization problem with a linear cost and LMI constraints.
Finally, in Section~\ref{subsec.ReducingDimension}, we show how to reduce the size of the LMI constraints to reduce the computational burden.

\subsection{Feasible Noise Parameterization}\label{subsec.NoiseConstTransform}
Since $\bar{u}$ is persistently exciting of order $T_{\mathrm{ini}}+T_f+n$, it is also persistently exciting of order $T_{\mathrm{ini}}+n$. In view of Lemma~\ref{lem.WillemFundamentalLemma},  $(u_\mathrm{ini},y_\mathrm{ini}-w)$ is a trajectory of $\mathcal{G}$ if and only if there exists $g_\mathrm{ini} \in \mathbb{R}^{T_d-T_\mathrm{ini}-T_f+1}$ such that
\begin{align}
\begin{bmatrix}
u_{\mathrm{ini}}\\
y_{\mathrm{ini}}-w
\end{bmatrix}=\begin{bmatrix}
U_p\\
Y_p
\end{bmatrix}g_\mathrm{ini}. \label{eq.yini-trajectoryconstraint}
\end{align}
Consider the solution $	g_{\mathrm{ini}}^* = U_p^\top \left(U_pU_p^\top\right)^{-1}u_{\mathrm{ini}}$  to the first equation in~\eqref{eq.yini-trajectoryconstraint}, i.e., $U_pg_{\mathrm{ini}}^*=u_{\mathrm{ini}}$. Any other solution $g_\mathrm{ini}$ verifying $U_pg_\mathrm{ini}=u_{\mathrm{ini}}$ can be written as $g_\mathrm{ini}=g_{\mathrm{ini}}^* +  Mg_w$ for some  $g_w\in \mathbb{R}^{T_d-(m+1)T_{\mathrm{ini}}-T_f+1}$, where	$M = \mathcal{N}(U_p)$.
Furthermore, from the second block row of~\eqref{eq.yini-trajectoryconstraint}, any $w$ that makes $(u_\mathrm{ini},y_\mathrm{ini}-w)$ a trajectory of $\mathcal{G}$ can be written as
	\begin{equation}\label{eq.w_parameterization}
		w = -Y_pMg_w + \underbrace{(- Y_pg_{\mathrm{ini}}^*+y_\mathrm{ini})}_{w_0}
	\end{equation}
for some $g_w$.

In view of the above results, the feasible noise trajectories can be explicitly parameterized as follows.
\begin{lemma}\label{lem.g_w_parameterization}
  The noise trajectory $w$ is feasible if and only if there exists $g_w$ satisfying~\eqref{eq.w_parameterization} and
	\begin{equation}\label{eq.g_w_constraint}
		\begin{bmatrix}
		1\\
		g_w
		\end{bmatrix}^\top
		\underbrace{\begin{bmatrix}
			{[A_w]}_{11} & {[A_w]}_{12} \\
			{[A_w]}_{12}^\top & {[A_w]}_{22}
			\end{bmatrix}}_{A_w}
		\begin{bmatrix}
		1\\
		g_w
		\end{bmatrix}\geq 0,
	\end{equation}
	where
	\begin{equation}\label{eq.A_w}
		\begin{split}
%			A_w&=
%		,\\
		{[A_w]}_{11}&=  \Phi_{11}+w_0^\top\Phi_{12}^\top+\Phi_{12}w_0+w_0^\top\Phi_{22}w_0, \\
		{[A_w]}_{12}&= -\Phi_{12}Y_pM-w_0^\top\Phi_{22}Y_pM, \\
		{[A_w]}_{22}&= M^\top Y_p^\top\Phi_{22}Y_pM.
		\end{split}
	\end{equation}
\end{lemma}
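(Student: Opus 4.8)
The plan is to split the feasibility of $w$ into its two defining requirements and handle them in turn. The requirement that $(u_{\mathrm{ini}},y_{\mathrm{ini}}-w)$ be a trajectory of $\mathcal{G}$ has in effect already been resolved in the discussion preceding the lemma: by Lemma~\ref{lem.WillemFundamentalLemma} and the parameterization $g_{\mathrm{ini}}=g_{\mathrm{ini}}^*+Mg_w$ of the solution set of $U_pg_{\mathrm{ini}}=u_{\mathrm{ini}}$, such $w$ are exactly those admitting a representation $w=-Y_pMg_w+w_0$ as in~\eqref{eq.w_parameterization}. Hence the only thing left to prove is that, once $w$ is written in this form, the quadratic noise bound~\eqref{eq.yini-noiseconstraint} on $w$ is equivalent to the quadratic bound~\eqref{eq.g_w_constraint} on $g_w$ with the blocks $[A_w]_{11},[A_w]_{12},[A_w]_{22}$ given by~\eqref{eq.A_w}.

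For that, I would substitute $w=w_0-Y_pMg_w$ directly into the scalar $\begin{bmatrix}1\\w\end{bmatrix}^\top\Phi\begin{bmatrix}1\\w\end{bmatrix}=\Phi_{11}+\Phi_{12}w+w^\top\Phi_{12}^\top+w^\top\Phi_{22}w$ and expand, collecting the result according to its degree in $g_w$. The degree-zero part is $\Phi_{11}+w_0^\top\Phi_{12}^\top+\Phi_{12}w_0+w_0^\top\Phi_{22}w_0$; the part linear in $g_w$ is $-2(\Phi_{12}Y_pM+w_0^\top\Phi_{22}Y_pM)g_w$, where the two linear contributions (one from $\Phi_{12}w$ together with its transpose, one from the cross terms of $w^\top\Phi_{22}w$) are merged using that a scalar equals its transpose and that $\Phi_{22}=\Phi_{22}^\top$; and the quadratic part is $g_w^\top M^\top Y_p^\top\Phi_{22}Y_pMg_w$. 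These are precisely $[A_w]_{11}$, $2[A_w]_{12}g_w$, and $g_w^\top[A_w]_{22}g_w$, so the whole expression equals $\begin{bmatrix}1\\g_w\end{bmatrix}^\top A_w\begin{bmatrix}1\\g_w\end{bmatrix}$, and therefore~\eqref{eq.yini-noiseconstraint} holds if and only if~\eqref{eq.g_w_constraint} holds.

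Chaining the two equivalences then gives the lemma. I do not expect a genuine obstacle here: the computation is a routine expansion of a quadratic form, and the only step requiring a moment's care is the logical plumbing of the ``if and only if''. The affine map $g_w\mapsto -Y_pMg_w+w_0$ need not be injective, so a feasible $w$ may correspond to several $g_w$; this is harmless, because the algebraic identity above holds for \emph{every} $g_w$ with $w=-Y_pMg_w+w_0$, and hence feasibility of $w$ is equivalent to the existence of at least one such $g_w$ satisfying~\eqref{eq.g_w_constraint}, which is exactly the statement to be proved.
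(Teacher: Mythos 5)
Your proposal is correct and follows essentially the same route as the paper: the trajectory requirement is handled by the parameterization $w=-Y_pMg_w+w_0$ established before the lemma, and the quadratic noise bound is converted by substituting this expression into~\eqref{eq.yini-noiseconstraint} and expanding, yielding exactly the blocks in~\eqref{eq.A_w}. Your extra remark on the non-injectivity of $g_w\mapsto w$ being harmless is a fine clarification of the ``if and only if'' logic that the paper leaves implicit.
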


\begin{proof}
  In addition to making $(u_{\mathrm{ini}}, y_{\mathrm{ini}}-w)$ a trajectory of $\mathcal{G}$, a feasible $w$ should also satisfy the constraint~\eqref{eq.yini-noiseconstraint}.
  Substituting~\eqref{eq.w_parameterization} into~\eqref{eq.yini-noiseconstraint}, we write the constraint on $g_w$ as
\begin{align*}
 \begin{bmatrix}
    1\\
    w
  \end{bmatrix}^\top\Phi
  \begin{bmatrix}
    1\\
    w
  \end{bmatrix}
%  &=  \Phi_{11}+w^\top\Phi_{12}^\top+\Phi_{12}w+w^\top\Phi_{22}w\\
  &= \Phi_{11}+(-Y_pMg_w+w_0)^\top\Phi_{12}^\top \\
  &+\Phi_{12}(-Y_pMg_w+w_0)\\
  &+(-Y_pMg_w+w_0)^\top\Phi_{22}(-Y_pMg_w+w_0)\\
  &=
    \begin{bmatrix}
      1\\
      g_w
    \end{bmatrix}^\top
{A_w}
                                     \begin{bmatrix}
                                       1\\
                                       g_w
                                     \end{bmatrix},
\end{align*}
with $A_w$ defined in~\eqref{eq.g_w_constraint}, \eqref{eq.A_w}.
\end{proof}

\begin{remark}
  We note that, from~\eqref{eq.w_parameterization}, for a given vector $g_{\mathrm{ini}}^*$, there might be multiple $g_w$ parameterizing the same noise trajectory $w$.
  In Section~\ref{subsec.ReducingDimension}, we show that this redundancy increases the computational complexity of the proposed method, and provide a method to overcome this problem.
\end{remark}

\subsection{Reformulation of the Tracking Error Constraint}\label{subsec.PerformanceTransformation}

In this section, we show that for a feasible noise trajectory $w$, the tracking error constraint LQTE$(u,y,w)\leq\gamma$ can be reformulated as a quadratic constraint on the parameter vector $g_w$.
We achieve this goal by first writing the output $y$ as an affine function of $u$ and $g_w$, and then substituting the expression of $y$ into the tracking error constraint.% and applying Lemma~\ref{lem.g_w_parameterization} to reformulate the tracking error constraint in terms of $g_w$ and $u$.

To express $y$ in terms of $u$ and $g_w$, we compute $g$ from the first three block rows of~\eqref{eq.yini-model-minmax}, and substitute it into the last block row of~\eqref{eq.yini-model-minmax}.
First of all, we show how to construct a solution $g$ from 
\begin{align}
  \begin{bmatrix}
    U_p\\
    Y_p\\
    U_f
  \end{bmatrix}g=
  \begin{bmatrix}
    u_{\mathrm{ini}}\\
    y_{\mathrm{ini}}-w\\
    u
  \end{bmatrix}.\label{eq.yini-FirstThreeSolution}
\end{align}
Since $(u_{\mathrm{ini}}, y_{\mathrm{ini}}-w)$ is a feasible system trajectory, in view of Lemma~\ref{lemma.UniqueOutput}, for any given input $u$, there exists a (possibly nonunique) vector $g$ verifying~\eqref{eq.yini-FirstThreeSolution}.
Any solution $g$ to~\eqref{eq.yini-FirstThreeSolution}  can be decomposed as $g\triangleq g_\mathrm{ini} + g_u$, where $g_\mathrm{ini}$ verifies~\eqref{eq.yini-trajectoryconstraint} and $g_u$ solves

\begin{equation}\label{eq.g_u}
	\begin{bmatrix}
	U_p\\
	Y_p\\
	U_f
	\end{bmatrix}g_u=
	\begin{bmatrix}
	\mathbf{0}\\
	\mathbf{0}\\
	u-U_fg_\mathrm{ini}
	\end{bmatrix}.
\end{equation}
Therefore, if we can find a solution $g_u$ to~\eqref{eq.g_u}, we can obtain a solution $g$ to~\eqref{eq.yini-FirstThreeSolution}.

Before showing how to construct $g_u$ in Lemma~\ref{lemma.Yp2LinearCombination}, the following results are needed.
In view of Theorem 2 of~\cite{RN11380},  the matrix $[U_p^\top,Y_p^\top,U_f^\top]^\top$ does not always have full row rank, even though $[U_p^\top,U_f^\top]^\top$ does.
Therefore, there exists a row permutation matrix $P_Y$ transforming $Y_p$ as $P_YY_p = [Y_{p1}^\top,Y_{p2}^\top]^\top$
 such that $\Lambda \triangleq[U_p^\top,Y_{p1}^\top,U_f^\top]^\top$ has full row rank and
 \begin{align*}
\text{rank}\left(\Lambda\right)=\text{rank}\left(\begin{bmatrix}
 U_p \\
 Y_{p} \\
 U_f
 \end{bmatrix}\right).   
 \end{align*}
 
\begin{lemma}\label{lemma.Yp2LinearCombination}
A solution to~\eqref{eq.g_u} is given by
	\begin{equation}\label{eq.g_u_solution}
		g_u=\Lambda^\top (\Lambda\Lambda^\top)^{-1}\begin{bmatrix}
		\mathbf{0}\\
		\mathbf{0}\\
		u-U_fg_\mathrm{ini}
		\end{bmatrix}.
	\end{equation}  
\end{lemma}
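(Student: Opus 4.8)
The plan is to verify that the expression~\eqref{eq.g_u_solution} actually satisfies the linear system~\eqref{eq.g_u}, exploiting the structure of $\Lambda$ and the row permutation $P_Y$. First I would observe that the right-hand side of~\eqref{eq.g_u} can be rewritten using the permuted version of $Y_p$: since $P_Y$ is a permutation, $Y_p g_u = \mathbf{0}$ is equivalent to $P_Y Y_p g_u = \mathbf{0}$, which in turn is equivalent to both $Y_{p1} g_u = \mathbf{0}$ and $Y_{p2} g_u = \mathbf{0}$. So the system~\eqref{eq.g_u} is equivalent to the pair of conditions $\Lambda g_u = \mathrm{col}(\mathbf{0}, \mathbf{0}, u - U_f g_\mathrm{ini})$ together with $Y_{p2} g_u = \mathbf{0}$.

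Next I would check the first condition directly: plugging~\eqref{eq.g_u_solution} into $\Lambda g_u$ gives $\Lambda \Lambda^\top (\Lambda\Lambda^\top)^{-1} \mathrm{col}(\mathbf{0}, \mathbf{0}, u - U_f g_\mathrm{ini}) = \mathrm{col}(\mathbf{0}, \mathbf{0}, u - U_f g_\mathrm{ini})$, using that $\Lambda$ has full row rank so $\Lambda\Lambda^\top$ is invertible. This is the routine part. The more delicate point is the second condition, $Y_{p2} g_u = \mathbf{0}$: I would argue that $g_u \in \mathrm{range}(\Lambda^\top) = \mathrm{range}([U_p^\top, Y_{p1}^\top, U_f^\top])$, i.e., $g_u$ lies in the row space of $\Lambda$. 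By the rank equality $\mathrm{rank}(\Lambda) = \mathrm{rank}([U_p^\top, Y_p^\top, U_f^\top]^\top)$, the rows of $Y_{p2}$ are linear combinations of the rows of $\Lambda$; hence the row space of $[U_p^\top, Y_p^\top, U_f^\top]^\top$ equals the row space of $\Lambda$. But then any vector in $\ker(\Lambda)$ also lies in $\ker([U_p^\top, Y_p^\top, U_f^\top]^\top)$, and in particular in $\ker(Y_{p2})$. Wait — more directly: the first three rows $\Lambda g_u = \mathrm{col}(\mathbf 0,\mathbf 0, u - U_f g_\mathrm{ini})$ already force $U_p g_u = \mathbf 0$, $Y_{p1} g_u = \mathbf 0$, $U_f g_u = u - U_f g_\mathrm{ini}$; and since each row of $Y_{p2}$ is a linear combination of rows of $U_p$, $Y_{p1}$, $U_f$, say row $j$ of $Y_{p2}$ equals a combination $\alpha^\top$ of those rows, we get $(Y_{p2})_j g_u = \alpha^\top \mathrm{col}(\mathbf 0,\mathbf 0, u - U_f g_\mathrm{ini})$. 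This is not obviously zero, so I must be more careful about \emph{which} linear combination is involved.

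The resolution, and the main obstacle, is to argue that the rows of $Y_{p2}$ are in fact linear combinations of the rows of $U_p$ and $Y_{p1}$ \emph{only} (not involving $U_f$); equivalently, that $\mathrm{rank}([U_p^\top, Y_{p1}^\top]^\top) = \mathrm{rank}([U_p^\top, Y_p^\top]^\top)$. This should follow from the behavioral interpretation: $T_\mathrm{ini} \geq \mathbf{l}(\mathcal{G})$ means the past window $(u_\mathrm{ini}, y_\mathrm{ini})$ already pins down the state, so adding future inputs $U_f$ cannot create new linear dependencies among the past-data rows that were not already present. Concretely, $\mathrm{rank}([U_p^\top, Y_p^\top]^\top) = \mathrm{rank}([U_p^\top, Y_{p1}^\top]^\top)$ because the rank of $[U_p^\top, Y_p^\top, U_f^\top]^\top$ exceeds that of $[U_p^\top, Y_p^\top]^\top$ exactly by the $mT_f$ independent rows contributed by $U_f$ (which are independent of the past since $\bar u$ is persistently exciting of the full order), and $\Lambda$ was constructed by deleting from $Y_p$ precisely the rows that are dependent on the rest. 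Hence each row of $Y_{p2}$ is a combination $\beta^\top$ of rows of $U_p$ and $Y_{p1}$ alone, so $(Y_{p2})_j g_u = \beta^\top \mathrm{col}(\mathbf 0, \mathbf 0) = \mathbf 0$, giving $Y_{p2} g_u = \mathbf{0}$ as required. Combining the two conditions, $g_u$ from~\eqref{eq.g_u_solution} solves~\eqref{eq.g_u}, completing the proof. I would double-check whether the paper's construction of $P_Y$ (via Theorem 2 of~\cite{RN11380}) already guarantees this "past-only dependence" property, in which case the argument shortens considerably; otherwise it needs the persistency-of-excitation plus $T_\mathrm{ini} \geq \mathbf{l}(\mathcal G)$ input to establish it.
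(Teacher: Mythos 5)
Your proposal is correct, and it mirrors the skeleton of the paper's proof: both reduce the problem to (i) the routine verification $\Lambda g_u=\mathrm{col}(\0,\0,u-U_fg_{\mathrm{ini}})$, which holds because $\Lambda$ has full row rank, and (ii) the crucial claim that each row of $Y_{p2}$ is a linear combination of rows of $U_p$ and $Y_{p1}$ \emph{only}, so that $Y_{p2}g_u=\0$. You correctly flag (ii) as the delicate point, and the answer to your closing question is: no, the construction of $P_Y$ by itself only guarantees that $\Lambda$ has full row rank with $\mathrm{rank}(\Lambda)=\mathrm{rank}([U_p^\top,Y_p^\top,U_f^\top]^\top)$; the "past-only dependence" is precisely what must be proved. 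Where you differ from the paper is in how you prove it. The paper argues by contradiction: if a row of $Y_{p2}$ needed the $U_f$ rows, then the always-solvability of~\eqref{eq.g_u} (which follows from Lemma~\ref{lemma.UniqueOutput} applied to the zero initial trajectory, for arbitrary $u-U_fg_{\mathrm{ini}}$) would be violated; it then reconstructs the full system via an explicit elementary-row-operation matrix $E$ satisfying~\eqref{eq.Eproperty1} and~\eqref{eq.Eproperty2}. You instead argue by dimension counting: $\mathrm{rank}([U_p^\top,Y_p^\top,U_f^\top]^\top)=\mathrm{rank}([U_p^\top,Y_p^\top]^\top)+mT_f$, so $[U_p^\top,Y_{p1}^\top]^\top$ (full row rank as a row-submatrix of $\Lambda$) has the same rank, hence the same row space, as $[U_p^\top,Y_p^\top]^\top$, which gives (ii) directly and avoids the $E$-matrix bookkeeping. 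This is cleaner, but note that your rank identity is not free: the independence of the $U_f$ rows from the rows of $[U_p^\top,Y_p^\top]^\top$ rests on the same behavioral input the paper uses — persistency of excitation together with $T_{\mathrm{ini}}\geq\mathbf{l}(\mathcal{G})$, made rigorous either through Theorem 2 of~\cite{RN11380} (which gives $\mathrm{rank}([U_p^\top,Y_p^\top]^\top)=mT_{\mathrm{ini}}+n$ and $\mathrm{rank}([U_p^\top,Y_p^\top,U_f^\top]^\top)=m(T_{\mathrm{ini}}+T_f)+n$) or through the same Lemma~\ref{lemma.UniqueOutput}-based solvability argument (if $\alpha^\top U_f$ lay in the row space of $[U_p^\top,Y_p^\top]^\top$ with $\alpha\neq\0$, then $\alpha^\top U_fg=0$ for every $g$ with $U_pg=\0$, $Y_pg=\0$, contradicting that $U_fg$ can be made arbitrary). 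With that step made explicit, your proof is complete and is essentially an equivalent, somewhat more direct, rendering of the paper's argument.
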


\begin{proof}
  Left multiply both sides of~\eqref{eq.g_u} with  $\left[ \begin{smallmatrix}
  \mathbf{I} & \mathbf{0} & \mathbf{0} \\
  \mathbf{0} & P_Y & \mathbf{0} \\
  \mathbf{0} & \mathbf{0} & \mathbf{I}
\end{smallmatrix}\right]$ to obtain
\begin{align}
\begin{bmatrix}
  U_p\\
  Y_{p1}\\
  Y_{p2}\\
  U_f
  \end{bmatrix}g_u=
  \begin{bmatrix}
    \0\\
    \0\\
    \0\\
    u-U_fg_\mathrm{ini}
  \end{bmatrix}.
  \label{eq.yini-initialtrajectoryconstraint}
\end{align}
By definition, the rows of $Y_{p2}$ can be written as linear combinations of the rows of $[U_p^\top,Y_{p1}^\top,U_f^\top]^\top$. Therefore there exists an ordered sequence of elementary row operations $\{E_k\}_{k=1}^e$ captured by the matrix $E\triangleq E_eE_{e-1}\dots E_1$ such that
\begin{align}
E\begin{bmatrix}
U_p\\
Y_{p1}\\
Y_{p2}\\
U_f
\end{bmatrix}=
\begin{bmatrix}
U_p\\
Y_{p1}\\
\0\\
U_f
\end{bmatrix}. \label{eq.Eproperty1}
\end{align}
We next show by contradiction that the rows of $Y_{p2}$ can also be written as linear combinations of the rows of $[  U_p^\top,  Y_{p1}^\top]^\top$. 
Suppose that the rows of $Y_{p2}$ cannot be written as linear combinations of the rows of $[U_p^\top,Y_{p1}^\top]^\top$.
Then, left multiplying $E$ to both sides of~\eqref{eq.yini-initialtrajectoryconstraint}, we obtain
\begin{align}
  \begin{bmatrix}
    U_p\\
    Y_{p1}\\
    \0\\
    U_f
  \end{bmatrix}g_u=
  \begin{bmatrix}
    \0\\
    \0\\
    \text{linear combination of rows of }u-U_fg_\mathrm{ini}\\
    u-U_fg_\mathrm{ini}
  \end{bmatrix}. \label{eq: permutationcontradiction}
\end{align}
Since $[\0^\top,\0^\top]^\top$ is a feasible system trajectory, in view of Lemma~\ref{lemma.UniqueOutput},  there always exists a $g_u$ solving~\eqref{eq.g_u}.
Therefore,~\eqref{eq.g_u} and further~\eqref{eq.yini-initialtrajectoryconstraint},~\eqref{eq: permutationcontradiction} should always be compatible for any $u-U_fg_{\mathrm{ini}}$.
However, it is clear from the third block row of~\eqref{eq: permutationcontradiction} that,~\eqref{eq: permutationcontradiction} is not always compatible for any  $u-U_fg_{\mathrm{ini}}$.
This makes a contradiction.
Therefore,  the rows of $Y_{p2}$ can be written as linear combinations of the rows of $[  U_p^\top,  Y_{p1}^\top]^\top$. 
As a result, the matrix $E$ can be constructed such that
\begin{align}
  	E
	\begin{bmatrix}
	\0\\
	\0\\
	\0\\
	u-U_f g_\mathrm{ini}
	\end{bmatrix}=
	\begin{bmatrix}
	\0\\
	\0\\
	\0\\
	u-U_f g_\mathrm{ini}
	\end{bmatrix},\label{eq.Eproperty2}
\end{align}
that is, the matrix $E$ only applies elementary row operations on the first three block rows of $	[
	\0^\top,
	\0^\top,
	\0^\top,
	(u-U_f g_\mathrm{ini})^\top]^\top.$

The vector $g_u$ in~\eqref{eq.g_u_solution} satisfies
	  \begin{align*}
	\begin{bmatrix}
	U_p\\
	Y_{p1}\\
	U_f
	\end{bmatrix}g_u=
	\begin{bmatrix}
	\0\\
	\0\\
	u-U_f g_\mathrm{ini}
	\end{bmatrix}.
	\end{align*}
	Then, we have
	\begin{align} \label{eq.temp1}
	\begin{bmatrix}
	U_p\\
	Y_{p1}\\
	\0\\
	U_f
	\end{bmatrix}g_u=
	\begin{bmatrix}
	\0\\
	\0\\
	\0\\
	u-U_f g_\mathrm{ini}
	\end{bmatrix}.
	\end{align}
	Left multiplying both sides of~\eqref{eq.temp1} by $E^{-1}$, in view of~\eqref{eq.Eproperty1} and~\eqref{eq.Eproperty2}, one obtains
	\begin{align*}
	\begin{bmatrix}
	U_p\\
	Y_{p1}\\
	Y_{p2}\\
	U_f
	\end{bmatrix}g_u
	=
	\begin{bmatrix}
	\0\\
	\0\\
	\0\\
	u-U_f g_\mathrm{ini}
	\end{bmatrix}.
	\end{align*}
Furthermore, from the definition of $P_Y$, we have
	\begin{align*}
	\begin{bmatrix}
	U_p\\
	Y_p\\
	U_f
	\end{bmatrix}
	=
	\begin{bmatrix}
	\I & \0 & \0 \\
	\0 & P_Y^{-1}& \0 \\
	\0 & \0 & \I
	\end{bmatrix}
	\begin{bmatrix}
          U_p\\
          \begin{bmatrix}
	Y_{p1}\\
	Y_{p2}            
          \end{bmatrix}\\
	U_f
	\end{bmatrix}.
	\end{align*}
	Therefore, we conclude the proof by showing that
	\begin{align*}
	& \begin{bmatrix}
	U_p\\
	Y_p\\
	U_f
	\end{bmatrix}g_u=
	\begin{bmatrix}
	\I & \0 & \0 \\
	\0 & P_Y^{-1}& \0 \\
	\0 & \0 & \I
	\end{bmatrix}
	\begin{bmatrix}
          \0\\
          \begin{bmatrix}
	\0\\
	\0
          \end{bmatrix}\\
	u-U_f g_\mathrm{ini}
	\end{bmatrix}
	=
	\begin{bmatrix}
          \0\\
          \begin{bmatrix}
	\0\\
	\0
      \end{bmatrix}\\
	u-U_f g_\mathrm{ini}
	\end{bmatrix}.
	\end{align*}

\end{proof}

A solution $g=g_{\mathrm{ini}}+g_u$ to~\eqref{eq.yini-FirstThreeSolution} can be obtained from a $g_\mathrm{ini}$ verifying~\eqref{eq.yini-trajectoryconstraint} and the $g_u$ in Lemma~\ref{lemma.Yp2LinearCombination}.
We next show that $y$ can be expressed as an affine function of $u$ and $g_w$, and further reformulate the tracking error constraint in terms of $u$ and $g_w$.
\begin{lemma}\label{lem.y_LQTE_parameterization}
	Given a feasible noise trajectory $w$ and a control sequence $u$, the unique output $y$ satisfying~\eqref{eq.yini-model-minmax} is given by 
	\begin{equation}\label{eq.y_parameterization}
		y = B_uu+B_wg_w+y_0,
	\end{equation}
	where $g_w$ parameterizes $w$ through~\eqref{eq.w_parameterization}, $y_0 = B_\mathrm{ini}g_{\mathrm{ini}}^*$,  $B_w = B_\mathrm{ini}M$,
	\begin{align*}
		B_\mathrm{ini}& = Y_f\left(\I+\Lambda^\top (\Lambda\Lambda^\top)^{-1}
		\begin{bmatrix}
			\0\\
			\0\\
			-U_f
		\end{bmatrix}\right),\\
	B_u& = Y_f \Lambda^\top (\Lambda\Lambda^\top)^{-1}
		\begin{bmatrix}
			\0\\
			\0\\
			\I
		\end{bmatrix}.
	\end{align*}
	Moreover,  the tracking error constraint $\mathrm{LQTE}(u,y,w)\leq\gamma$ can be equivalently expressed as
	\begin{align}
		\begin{bmatrix}
			1 \\
			g_w
		\end{bmatrix}^\top
		\underbrace{\begin{bmatrix}
			{[Q_g]}_{11} & {[Q_g]}_{12}\\
			{[Q_g]}_{12}^\top & {[Q_g]}_{22}
			\end{bmatrix}}_{Q_g(u,\gamma)}
		\begin{bmatrix}
			1\\
			g_w
		\end{bmatrix}\ge 0, \label{yini-performancequadraticform}
	\end{align}
where 
\begin{align*}
   &\bar{R}=\I\otimes R, \quad        \bar{Q}=\I\otimes Q,\\
	&{[Q_g]}_{11}=\gamma-u^\top\bar{R}u-(B_uu+y_0-r)^\top\bar{Q}(B_uu+y_0-r) ,\\
	&{[Q_g]}_{12}= -(B_uu+y_0-r)^\top\bar{Q}B_w,\quad  {[Q_g]}_{22}= -B_w^\top\bar{Q}B_w.
\end{align*}
\end{lemma}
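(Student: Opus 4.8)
The plan is to establish the two assertions separately: the formula~\eqref{eq.y_parameterization} for $y$ follows from Lemma~\ref{lemma.UniqueOutput} together with the explicit solution of~\eqref{eq.yini-FirstThreeSolution} constructed in Lemma~\ref{lemma.Yp2LinearCombination}, while the quadratic-form reformulation~\eqref{yini-performancequadraticform} follows by direct substitution and collecting terms.

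For the first part, recall from Lemma~\ref{lemma.UniqueOutput} that, for the feasible trajectory $(u_{\mathrm{ini}}, y_{\mathrm{ini}}-w)$ and the input $u$, the output $y$ in~\eqref{eq.yini-model-minmax} is unique and equals $Y_f g$ for \emph{any} $g$ solving the first three block rows~\eqref{eq.yini-FirstThreeSolution}. I would therefore exhibit one such $g$. Fix a $g_w$ parameterizing $w$ through~\eqref{eq.w_parameterization}; then $g_{\mathrm{ini}} := g_{\mathrm{ini}}^* + Mg_w$ solves~\eqref{eq.yini-trajectoryconstraint}, since $U_pM=\0$ gives $U_pg_{\mathrm{ini}}=u_{\mathrm{ini}}$ and the definition of $w_0$ gives $Y_pg_{\mathrm{ini}}=Y_pg_{\mathrm{ini}}^*-Y_pMg_w=y_{\mathrm{ini}}-w$. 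Taking $g_u$ as in~\eqref{eq.g_u_solution} for this $g_{\mathrm{ini}}$, Lemma~\ref{lemma.Yp2LinearCombination} shows $g := g_{\mathrm{ini}}+g_u$ solves~\eqref{eq.yini-FirstThreeSolution}, so $y=Y_fg=Y_fg_{\mathrm{ini}}+Y_fg_u$. The key step is to split the right-hand side of~\eqref{eq.g_u_solution} into the part depending on $u$ and the part depending on $-U_fg_{\mathrm{ini}}$: the former contributes exactly $B_uu$, and collecting the latter with $Y_fg_{\mathrm{ini}}$ yields $B_{\mathrm{ini}}g_{\mathrm{ini}}$ by the definition of $B_{\mathrm{ini}}$. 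Substituting $g_{\mathrm{ini}}=g_{\mathrm{ini}}^*+Mg_w$ and using $y_0=B_{\mathrm{ini}}g_{\mathrm{ini}}^*$, $B_w=B_{\mathrm{ini}}M$ then gives $y=B_uu+B_wg_w+y_0$, which is~\eqref{eq.y_parameterization}.

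For the second part, write $\mathrm{LQTE}(u,y,w)=\|y-r\|_{\bar{Q}}^2+\|u\|_{\bar{R}}^2$ with $\bar{Q}=\I\otimes Q$, $\bar{R}=\I\otimes R$ and $r$ the stacked reference. Substituting~\eqref{eq.y_parameterization} and abbreviating $v:=B_uu+y_0-r$ gives $y-r=v+B_wg_w$, hence $\mathrm{LQTE}=v^\top\bar{Q}v+2v^\top\bar{Q}B_wg_w+g_w^\top B_w^\top\bar{Q}B_wg_w+u^\top\bar{R}u$. Therefore $\mathrm{LQTE}(u,y,w)\le\gamma$ is equivalent to $\begin{bmatrix}1\\ g_w\end{bmatrix}^\top Q_g(u,\gamma)\begin{bmatrix}1\\ g_w\end{bmatrix}\ge0$, where the constant, linear, and quadratic terms in $g_w$ are precisely $[Q_g]_{11}$, $[Q_g]_{12}$, $[Q_g]_{22}$ as stated.

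I expect the main obstacle to be bookkeeping rather than anything conceptual: since $g_u$ in~\eqref{eq.g_u_solution} itself depends on $g_{\mathrm{ini}}$ and hence on $g_w$, one must carefully isolate the $u$-linear part of $Y_fg$ from its $g_w$-affine part before the coefficient matrices $B_u$, $B_{\mathrm{ini}}$ (and thus $B_w$, $y_0$) emerge cleanly. A minor point worth noting is that admissibility leaves $g_w$ non-unique for a fixed $w$, yet~\eqref{eq.y_parameterization} still yields the unique $y$; this consistency is automatic from the uniqueness clause of Lemma~\ref{lemma.UniqueOutput} and need not be proved separately.
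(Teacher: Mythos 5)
Your proposal is correct and follows essentially the same route as the paper's proof: take $g=g_{\mathrm{ini}}+g_u$ with $g_u$ given by~\eqref{eq.g_u_solution}, split $Y_fg$ into the $u$-part ($B_uu$) and the $g_{\mathrm{ini}}$-part ($B_{\mathrm{ini}}g_{\mathrm{ini}}$), substitute $g_{\mathrm{ini}}=g_{\mathrm{ini}}^*+Mg_w$, and then plug~\eqref{eq.y_parameterization} into the tracking cost to read off $Q_g(u,\gamma)$. Only a trivial sign slip: in your check of the second block row you should have $Y_pg_{\mathrm{ini}}=Y_pg_{\mathrm{ini}}^*+Y_pMg_w=y_{\mathrm{ini}}-w$ (plus, not minus), which does not affect the argument.
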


\begin{proof}
Substituting $g=g_\mathrm{ini}+g_u$ into the fourth block row of~\eqref{eq.yini-model-minmax}, one obtains
	\begin{align*}
	y&=Y_f g_\mathrm{ini}+Y_f g_u\\
	&=Y_f g_\mathrm{ini}+Y_f\Lambda^\top (\Lambda\Lambda^\top)^{-1}
	\left(\begin{bmatrix}
	\0\\
	\0\\
	\I
	\end{bmatrix}u+
	\begin{bmatrix}
	\0\\
	\0\\
	-U_f
	\end{bmatrix}g_\mathrm{ini} \right)\\
	&=B_\mathrm{ini} g_\mathrm{ini} +B_uu = B_\mathrm{ini}(g_{\mathrm{ini}}^*+Mg_w)+B_uu,
	%&=B_uu+B_wg_w+y_0,
%	&=B_\mathrm{ini}(Mg_w+g_{\mathrm{ini}}^*)+B_uu
%	=\underbrace{B_\mathrm{ini}g_{\mathrm{ini}}^*}_{y_0}+\underbrace{B_\mathrm{ini}M}_{B_w}g_w+B_uu\\
%	&=y_0+B_wg_w+B_uu.
	\end{align*}
which proves~\eqref{eq.y_parameterization}. 
We then have the following equivalent conditions
\begin{align*}
  &\text{LQTE}(u,y,w)\leq \gamma \Leftrightarrow \gamma - u^\top\bar{R}u-(y-r)^\top\bar{Q}(y-r)\geq 0,\\
  & \Leftrightarrow \gamma - u^\top\bar{R}u\\
  &  -(y_0+B_wg_w+B_uu-r)^\top\bar{Q}(y_0+B_wg_w+B_uu-r)\ge 0,\\
  & \Leftrightarrow \eqref{yini-performancequadraticform}.
\end{align*}
The proof is complete.
\end{proof}
\subsection{Main Result} \label{subsec.MainResult}

The following theorem leverages the results obtained in Lemma~\ref{lem.g_w_parameterization} and Lemma~\ref{lem.y_LQTE_parameterization} to show that~\eqref{eq.minimization_problem} and, hence, \textbf{P1}, are equivalent to a minimization problem with a linear cost and LMI constraints.

\begin{theorem}\label{thm.LMIopt}
  The robust tracking control problem \textbf{P1} is equivalent to solving
  \begin{subequations} \label{eq.finalOpt}
\begin{gather}
  \min_{u, \gamma, \alpha\ge 0}\; \gamma\\
  \mathrm{s.t.},
\begin{bmatrix}
{(\bar{R}+B_u^\top\bar{Q}B_u)}^{-1}&
\begin{bmatrix}
u & \0      
\end{bmatrix}\\
\begin{bmatrix}
u^\top\\
\0
\end{bmatrix}& Q_g^a(u,\gamma)-\alpha A_w
\end{bmatrix}\ge 0,\label{eq.finalLMI}
\end{gather}
  \end{subequations}
 where
 \begin{equation}\label{eq.Qgadef}
 	Q_g^a(u, \gamma)=Q_g(u, \gamma)+\begin{bmatrix}
 	u^\top (R+B_u^\top\bar{Q}B_u)u & \0 \\
 	\0 & \0
 	\end{bmatrix},
      \end{equation}
 $A_w$ is defined in Lemma~\ref{lem.g_w_parameterization}, and $\bar{Q},~\bar{R},~Q_g,$ and $B_u$ are defined in Lemma~\ref{lem.y_LQTE_parameterization}.
\end{theorem}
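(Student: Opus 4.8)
The plan is to combine the two parameterization lemmas with the S-lemma and close with a Schur-complement manipulation. Starting from the reformulation \eqref{eq.minimization_problem} of \textbf{P1}, observe that for a fixed input $u$ and a fixed feasible noise trajectory $w$, Lemma~\ref{lemma.UniqueOutput} --- equivalently, the affine formula \eqref{eq.y_parameterization} --- determines the output $y$ uniquely, so $y$ and the auxiliary vector $g$ carry no freedom and \textbf{P1} reduces to choosing $(u,\gamma)$ that minimize $\gamma$ subject to $\mathrm{LQTE}(u,y,w)\le\gamma$ for every feasible $w$. By Lemma~\ref{lem.g_w_parameterization}, the feasible noise trajectories are precisely those of the form \eqref{eq.w_parameterization} with $g_w$ satisfying \eqref{eq.g_w_constraint}; and by Lemma~\ref{lem.y_LQTE_parameterization}, for each such $g_w$ the inequality $\mathrm{LQTE}(u,y,w)\le\gamma$ is equivalent to the scalar quadratic inequality \eqref{yini-performancequadraticform}. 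Hence the robust constraint in \eqref{eq.minimization_problem} is equivalent to the implication that \eqref{yini-performancequadraticform} holds for every $g_w$ obeying \eqref{eq.g_w_constraint}.

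This is exactly the premise of the S-lemma~\cite{polik2007survey}. Assuming the (mild) regularity condition that \eqref{eq.g_w_constraint} admits a strictly feasible $g_w$ --- which holds, e.g., whenever the noise set \eqref{eq.yini-noiseconstraint} has nonempty interior --- the S-lemma converts the above ``for all $g_w$'' statement into the existence of a scalar multiplier $\alpha\ge0$ with $Q_g(u,\gamma)-\alpha A_w\ge0$. Therefore \textbf{P1} is equivalent to $\min_{u,\gamma,\alpha\ge0}\gamma$ subject to $Q_g(u,\gamma)-\alpha A_w\ge0$.

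It remains to make this matrix inequality affine in the decision variables. The only obstruction is that $[Q_g]_{11}$ is quadratic in $u$, through $u^\top\bar{R}u$ and the quadratic part of $(B_uu+y_0-r)^\top\bar{Q}(B_uu+y_0-r)$. Collecting these terms yields $Q_g(u,\gamma)=Q_g^a(u,\gamma)-\bigl[\begin{smallmatrix}u^\top\\ \0\end{smallmatrix}\bigr](\bar{R}+B_u^\top\bar{Q}B_u)\bigl[\begin{smallmatrix}u&\0\end{smallmatrix}\bigr]$, where $Q_g^a$ (cf.\ \eqref{eq.Qgadef}) is affine in $(u,\gamma)$, hence $Q_g^a(u,\gamma)-\alpha A_w$ is affine in $(u,\gamma,\alpha)$. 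Since $R>0$ and $\bar{Q}\ge0$, the matrix $\bar{R}+B_u^\top\bar{Q}B_u$ is positive definite; applying the Schur-complement lemma to $Q_g^a(u,\gamma)-\alpha A_w-\bigl[\begin{smallmatrix}u^\top\\ \0\end{smallmatrix}\bigr](\bar{R}+B_u^\top\bar{Q}B_u)\bigl[\begin{smallmatrix}u&\0\end{smallmatrix}\bigr]\ge0$ produces exactly the LMI \eqref{eq.finalLMI}. As every step is a logical equivalence, \textbf{P1} and \eqref{eq.finalOpt} share the same optimal value and the same optimal inputs, which is the assertion.

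I expect the delicate point to be the S-lemma step: its ``only if'' direction --- the non-conservativeness of the reformulation --- relies on the strict-feasibility/regularity condition for \eqref{eq.g_w_constraint}, and one must also check that the quantifier ``for every feasible $w$'' passes faithfully through the (possibly many-to-one) map $g_w\mapsto w$ of \eqref{eq.w_parameterization}. By contrast, the Schur-complement bookkeeping and the isolation of the quadratic-in-$u$ term are routine once one notes that $\bar{R}+B_u^\top\bar{Q}B_u$ is positive definite.
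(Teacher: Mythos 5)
Your proposal is correct and follows essentially the same route as the paper's own proof: reduce \textbf{P1} via Lemma~\ref{lem.g_w_parameterization} and Lemma~\ref{lem.y_LQTE_parameterization} to ``\eqref{yini-performancequadraticform} for all $g_w$ satisfying \eqref{eq.g_w_constraint}'', apply the S-lemma to obtain $Q_g(u,\gamma)-\alpha A_w\ge 0$ with $\alpha\ge 0$, and use the Schur complement together with the cancellation of the quadratic-in-$u$ term to arrive at the affine LMI \eqref{eq.finalLMI}. Your explicit remarks on the strict-feasibility condition needed for the ``only if'' direction of the S-lemma and on the well-posedness of the quantifier under the many-to-one map $g_w\mapsto w$ are sound refinements of points the paper passes over silently, not a departure from its argument.
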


\begin{proof}
  Based on Lemma~\ref{lem.g_w_parameterization} and Lemma~\ref{lem.y_LQTE_parameterization}, the minimization problem~\eqref{eq.minimization_problem} is equivalent to 
  \begin{gather*}%\label{eq.minimization_problem_mod}
  \min_{u,\gamma} \quad \gamma \\
  \text{s.t.},  \quad \eqref{yini-performancequadraticform} ~\text{holds}~ \forall g_w \text{ satisfying } \eqref{eq.g_w_constraint}.
  \end{gather*}
  In view of the S-lemma~\cite{polik2007survey}, the constraint of this minimization problem holds if and only if there exist $u$ and $\alpha\ge 0$ such that
  \begin{equation*}
  Q_g(u, \gamma)-\alpha A_w\ge 0.
  \end{equation*}
  Using Schur complement~\cite{boyd1994linear}, the above matrix inequality can be transformed into the LMI in~\eqref{eq.finalLMI}.
  Note that the quadratic term $u^\top (R+B_u^\top\bar{Q}B_u)u$ in the right hand side of~\eqref{eq.Qgadef} cancels out with the quadratic term of $u$ in $Q_g(u, \gamma)$, therefore making $Q_g^a(u, \gamma)$ a linear function of $u$ and $\gamma$.
  Minimizing the performance index $\gamma$ further gives the solution of~\eqref{eq.minimization_problem} and hence \textbf{P1}.
\end{proof}

\subsection{ Implementation Aspects: Dimension Reduction for Computational Efficiency}\label{subsec.ReducingDimension}
In view of the analysis in Section~\ref{subsec.NoiseConstTransform}, the sequence $w$ makes $(u_{\mathrm{ini}}, y_{\mathrm{ini}}-w)$ a trajectory of $\mathcal{G}$ if and only if there exists $g_w$, such that 
\begin{align}
  \label{eq:nonRedMap}
w=-Y_p\mathcal{N}(U_p)g_w+w_0.
\end{align}
However, if $g_w$ is mapped into $w$ through~\eqref{eq:nonRedMap}, any $g_w+v$, where $v\in \mathrm{ker}(Y_p\mathcal{N}(U_p))$, is also mapped into the same $w$.
This is especially true when the length $T_d$ of historical data is large, i.e., $T_d\gg mT_\mathrm{ini}$ and $T_d\gg pT_\mathrm{ini}$, which makes $\mathrm{ker}(U_p) \cap \mathrm{ker}(Y_p)$, and therefore $\mathrm{ker}(Y_p\mathcal{N}(U_p))$, nonempty.
As a result, any parameterization of a subspace through $g_w$ is redundant.
Redundancy affects also the constraint~\eqref{eq.finalLMI}. 
Indeed, if the length of the vector $g_w$ is unnecessarily large, so are the sizes of the matrices $A_w$ and $Q_g(u,\gamma)$ in~\eqref{eq.g_w_constraint},~\eqref{yini-performancequadraticform}, as well as the LMI constraint in~\eqref{eq.finalLMI}, making the optimization problem~\eqref{eq.finalOpt} inefficient. 

More formally, denote $\mathcal{W}$ as the set of noise trajectories $w$ that make $(u_{\mathrm{ini}}, y_{\mathrm{ini}}-w)$ a trajectory of $\mathcal{G}$.
We have, from~\eqref{eq:nonRedMap}, $\mathcal{W}=\mathrm{range}(Y_p\mathcal{N}(U_p))+w_0$, where we represent the vector space $\mathrm{range}(Y_p\mathcal{N}(U_p))$ as
\begin{align}
  \label{eq:1}
\{Y_p\mathcal{N}(U_p)g_w| g_w \in \mathbb{R}^{T_d-(m+1)T_{\mathrm{ini}}-T_f+1}\}.  
\end{align}
 The cause of redundancy is that the dimension of the free vector $g_w$ in~\eqref{eq:1} can be much larger than the dimension of  $\mathrm{range}(Y_p\mathcal{N}(U_p))$.
In the following theorem, we  address this issue to present a non-redundant representation of $\mathcal{W}$.
\begin{theorem}\label{thm:M_selection}
 The vector $w$ belongs to $\mathcal{W}$ if and only if there exists $g_w\in \mathbb{R}^{\bar{n}_w}$ such that
	\begin{equation}\label{eq.M_selection}
	w=-Y_p\mathcal{N}(U_p) \mathcal{R}(\mathcal{N}(U_p)^\top Y_p^\top)g_w+w_0.
      \end{equation}
      where $\bar{n}_w={\mathrm{rank}(Y_p\mathcal{N}(U_p))}$.
        Moreover, the above mapping from $\mathbb{R}^{\bar{n}_w}$ to $\mathcal{W}$ is bijective.
\end{theorem}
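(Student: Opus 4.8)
The plan is to show two things: (i) the range of $Y_p\mathcal{N}(U_p)\mathcal{R}(\mathcal{N}(U_p)^\top Y_p^\top)$ equals $\mathrm{range}(Y_p\mathcal{N}(U_p))$, so the new parameterization covers the same set $\mathcal{W}$; and (ii) the linear map $g_w\mapsto -Y_p\mathcal{N}(U_p)\mathcal{R}(\mathcal{N}(U_p)^\top Y_p^\top)g_w$ is injective, which combined with a dimension count gives bijectivity onto $\mathrm{range}(Y_p\mathcal{N}(U_p))$, hence bijectivity onto $\mathcal{W}$ after the affine shift by $w_0$.

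For convenience write $N\triangleq\mathcal{N}(U_p)$ and $G\triangleq Y_pN$, so the claim concerns the map $g_w\mapsto -G\,\mathcal{R}(G^\top)g_w+w_0$, with $\bar n_w=\mathrm{rank}(G)$. First I would establish (i). Since $\mathcal{R}(G^\top)$ has columns forming a basis of $\mathrm{range}(G^\top)$, any $g_w'$ can be orthogonally decomposed as $g_w'=\mathcal{R}(G^\top)a+b$ with $b\in\ker(G)$ (because $\mathrm{range}(G^\top)=\ker(G)^\perp$); then $Gg_w'=G\mathcal{R}(G^\top)a$. Hence $\mathrm{range}(G\mathcal{R}(G^\top))\supseteq\mathrm{range}(G)$, and the reverse inclusion is immediate, giving equality. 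Combined with $\mathcal{W}=\mathrm{range}(G)+w_0$ from~\eqref{eq:nonRedMap}, this shows $w\in\mathcal{W}$ iff $w=-G\mathcal{R}(G^\top)g_w+w_0$ for some $g_w\in\mathbb{R}^{\bar n_w}$.

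Next, (ii): suppose $G\mathcal{R}(G^\top)g_w=0$. Then $\mathcal{R}(G^\top)g_w\in\ker(G)\cap\mathrm{range}(G^\top)=\ker(G)\cap\ker(G)^\perp=\{0\}$, so $\mathcal{R}(G^\top)g_w=0$; since the columns of $\mathcal{R}(G^\top)$ are linearly independent, $g_w=0$. Thus the map is injective, and as its domain $\mathbb{R}^{\bar n_w}$ has dimension $\bar n_w=\mathrm{rank}(G)=\dim\mathrm{range}(G)$, it is a bijection onto $\mathrm{range}(G)$; translating by the fixed vector $w_0$ preserves bijectivity, so the map $\mathbb{R}^{\bar n_w}\to\mathcal{W}$ in~\eqref{eq.M_selection} is bijective.

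I do not anticipate a serious obstacle here — the argument is essentially the elementary fact that restricting a linear operator to the orthogonal complement of its kernel (equivalently, to its row space) yields an injection with unchanged range. The only point requiring mild care is to pin down the precise properties of the basis matrices $\mathcal{N}(\cdot)$ and $\mathcal{R}(\cdot)$ as defined in the Notation — in particular that $\mathrm{range}(\mathcal{R}(G^\top))=\mathrm{range}(G^\top)$ is exactly $\ker(G)^\perp$ — and to invoke the rank–nullity relation $\dim\ker(G)+\mathrm{rank}(G)=T_d-(m+1)T_{\mathrm{ini}}-T_f+1$ so that the "redundancy" being removed is quantified as $\dim\ker(G)$. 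Everything else is bookkeeping.
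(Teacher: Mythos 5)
Your proof is correct and follows essentially the same route as the paper's: both use the orthogonal decomposition of the domain into $\ker(Y_p\mathcal{N}(U_p))$ and $\mathrm{range}(\mathcal{N}(U_p)^\top Y_p^\top)$ to show that restricting to the row space leaves the range unchanged, and then conclude the map onto $\mathcal{W}=\mathrm{range}(Y_p\mathcal{N}(U_p))+w_0$ is bijective. Your explicit injectivity argument merely spells out a step the paper leaves implicit in its isomorphism claim, so no changes are needed.
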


\begin{proof}
  Since  two vector spaces are isomorphic if and only if they have the same dimension, to eliminate the redundant representation problem, we introduce an isomorphism from  $\mathbb{R}^{\bar{n}_w}$ to  $\mathrm{range}(Y_p\mathcal{N}(U_p))$ and represent $\mathrm{range}(Y_p\mathcal{N}(U_p))$ in terms of this isomorphism.
Notice that
\begin{align*}
  &  \mathrm{range}(Y_p\mathcal{N}(U_p))=\{Y_p\mathcal{N}(U_p)g| g\in \mathbb{R}^{T_d-T_{\mathrm{ini}}+1-T_{\mathrm{ini}}m}\} \\
  &\overset{(a)}{=}\{Y_p\mathcal{N}(U_p)(g_1+g_2)| g_1\in \mathrm{ker}(Y_p\mathcal{N}(U_p)),\\
  & \qquad \qquad \qquad \qquad \quad \qquad \qquad \qquad g_2\in \mathrm{range}(\mathcal{N}(U_p)^\top Y_p^\top)\}\\
                  &=\{Y_p\mathcal{N}(U_p)g_2|  g_2\in \mathrm{range}(\mathcal{N}(U_p)^\top Y_p^\top)\}\\
  &=\{Y_p\mathcal{N}(U_p) \mathcal{R}(\mathcal{N}(U_p)^\top Y_p^\top)g_w| g_w\in \mathbb{R}^{\bar{n}_w}\}
\end{align*}
where $(a)$ follows from the fact that
\begin{align*}
\mathrm{ker}(Y_p\mathcal{N}(U_p)) \perp \mathrm{range}(\mathcal{N}(U_p)^\top Y_p^\top).  
\end{align*}
Therefore an isomorphism from $\mathbb{R}^{\bar{n}_w}$  to   $\mathrm{range}(Y_p\mathcal{N}(U_p))$ is given by the matrix $Y_p\mathcal{N}(U_p) \mathcal{R}(\mathcal{N}(U_p)^\top Y_p^\top)$.
Furthermore, since $\mathcal{W}$ is $\mathrm{range}(Y_p\mathcal{N}(U_p))$ shifted by $w_0$, the mapping from  $\mathbb{R}^{\bar{n}_w}$ to $\mathcal{W}$  given by~\eqref{eq.M_selection} is bijective.
\end{proof}
In view of the above theorem, the representation of $\mathcal{W}$ through~\eqref{eq.M_selection} using $g_w\in \mathbb{R}^{\bar{n}_w}$ is non-redundant.
To apply the above result in the implementation of~\eqref{eq.finalOpt}, we only need to replace the matrix $M=\mathcal{N}(U_p)$ in the derivations of Section~\ref{subsec.NoiseConstTransform}--\ref{subsec.MainResult} with $M=\mathcal{N}(U_p) \mathcal{R}(\mathcal{N}(U_p)^\top Y_p^\top)$.

\begin{remark}\label{rem:Mg_w_dimension}
  Since
  \begin{align*}
{\bar{n}_w}=  \mathrm{rank}\left(\begin{bmatrix}
	U_p \\
	Y_p
      \end{bmatrix}\right)-T_{\mathrm{ini}}m \overset{(a)}{=} n,  
  \end{align*}
  where $(a)$ follows from Theorem 2 of~\cite{RN11380}, the length of the vector $g_w$ in~\eqref{eq.M_selection} is equal to $n$.
    This guarantees that the size of the matrix in the LMI in~\eqref{eq.finalLMI} scales with $n+T_fm$.
    On the contrary, the length of the vector $g_w$ in~\eqref{eq.w_parameterization} scales with $T_d$.
    As $T_d$ is usually significantly larger than $n$, the non-redundant parameterization shown in this section can reduce the size of the LMI constraint~\eqref{eq.finalLMI} considerably.
\end{remark}

\section{Generalizations}\label{sec:Extensions}
In this section, we provide several extensions to the robust control design method described in the previous section.
First, in Section~\ref{subsec.IOConstraints}, we show how to add input and output constraints to the controller.
In Section~\ref{subsec.InputDist}, we show how to take into account actuator disturbances, before presenting the overall LMI optimization problem incorporating both extensions in Section~\ref{subsec.FinalControlDesignProblem}.

\subsection{Input and Output Constraints} \label{subsec.IOConstraints}
In this section, we show how to add quadratic input and output constraints to problem \textbf{P1}.
Since constraints on the input can be directly incorporated into~\eqref{eq.finalOpt}, hereafter we focus on constraints on the output $y$ only and in the form
	\begin{align}\label{eq.O_cons_quad}
	\theta(y)=\begin{bmatrix}
	1 \\
	y
	\end{bmatrix}^\top\underbrace{\begin{bmatrix}
		\Theta_{11} & \Theta_{12} \\
		\Theta_{12}^\top & \Theta_{22}
		\end{bmatrix}}_{\Theta}\begin{bmatrix}
	1 \\
	y
	\end{bmatrix}\geq 0.
	\end{align}
When $\Theta_{22}=\Theta_{22}^\top<0$ and $\Theta_{12}=0$, the above constraint imposes an upper bound on $\|y\|_2$.

Since the output is related to the input $u$ and the noise trajectory $w$ via~\eqref{eq.y_parameterization}, the output constraint~\eqref{eq.O_cons_quad} can be written as 
\begin{equation*}
	\begin{split}
	 \Theta_{11}&+\Theta_{12}\left(y_0+B_uu+B_wg_w\right)+\left(y_0+B_uu+B_wg_w\right)^\top\Theta_{12}^\top \\
	 &+\left(y_0+B_uu+B_wg_w\right)^\top\Theta_{22}\left(y_0+B_uu+B_wg_w\right) \\
	 &= \begin{bmatrix}
	 1 \\
	 g_w
	 \end{bmatrix}^\top \underbrace{\begin{bmatrix}
	 	\left[\Theta_g\right]_{11} & \left[\Theta_g\right]_{12} \\
	 	\left[\Theta_g\right]_{12}^\top & \left[\Theta_g\right]_{22}
	 	\end{bmatrix} }_{\Theta_g} \begin{bmatrix}
	 1 \\
	 g_w
	 \end{bmatrix} \geq 0,
	\end{split}
\end{equation*}
where
\begin{align*}
  \left[\Theta_g\right]_{22}& = B_w^\top\Theta_{22}B_w,\\
  \left[\Theta_g\right]_{11} &= \Theta_{11} + \Theta_{12}\left(y_0+B_uu\right)+\left(y_0+B_uu\right)^\top\Theta_{12}^\top\\
                            &+ \left(y_0+B_uu\right)^\top\Theta_{22}\left(y_0+B_uu\right),\\
  \left[\Theta_g\right]_{12} &= \Theta_{12}B_w+\left(y_0+B_uu\right)^\top\Theta_{22}B_w .
\end{align*}
In principle, we want  the constraint~\eqref{eq.O_cons_quad} to hold for all feasible noise trajectories.
Similarly to the proof of Theorem~\ref{thm.LMIopt}, this requirement is equivalent to the existence of $\alpha_y\geq0$ such that
$\Theta_g-\alpha_yA_w\geq 0$,
which can be reformulated as an LMI constraint and added to the optimization problem~\eqref{eq.finalOpt}.

\subsection{Actuator Disturbances}\label{subsec.InputDist}
In this section, we show how to consider actuator disturbances.
Suppose  the actuation input $\check{u}_{\mathrm{ini}}$ to the system $\mathcal{G}$ to generate the recent data is also noisy, i.e., 
$$\check{u}_\mathrm{ini}=u_\mathrm{ini}-d_\mathrm{ini},$$
where $u_\mathrm{ini}$ is nominal control input and $d_\mathrm{ini}$ is the actuator disturbance.
Moreover, we also consider a disturbance $d$ acting on the computed control input $u$, i.e., $\check{u}= u-d$.
Therefore, the data-dependent relation~\eqref{eq.yini-model-minmax} becomes

\begin{equation}\label{eq.yini-model-disturbance}
	\begin{bmatrix}
	U_p \\
	Y_p \\
	U_f \\
	Y_f
	\end{bmatrix} g=\begin{bmatrix}
	u_{\mathrm{ini}} \\
	y_{\mathrm{ini}} \\
	u \\
	y
	\end{bmatrix}-\begin{bmatrix}
	d_\mathrm{ini} \\
	w \\
	d \\
	\0
	\end{bmatrix}.
\end{equation}

We assume that the actuator disturbance $\bar{d}\triangleq [d_\mathrm{ini}^\top,d^\top]^\top$  satisfies the quadratic constraint
\begin{equation}\label{eq.distconstraint}
	\begin{bmatrix}
	1 \\
	\bar{d}
	\end{bmatrix}^\top\underbrace{\begin{bmatrix}
		\Phi_{d,11} & \Phi_{d,12} \\
		\Phi_{d,12}^\top & \Phi_{d,22}
		\end{bmatrix}}_{\Phi_{d}}\begin{bmatrix}
	1 \\
	\bar{d}
	\end{bmatrix}\geq 0,
\end{equation}
where $\Phi_{d, 22}< 0$.

Our goal is to solve a min-max robust control problem similar to \textbf{P1}.
Due to the existence of actuator disturbances, we replace $\|u\|_R^2$ with the true system input $\|\check{u}\|_R^2$ in the cost~\eqref{eq.yini-performanceobjective}, replace~\eqref{eq.yini-model-minmax} with~\eqref{eq.yini-model-disturbance}, and optimize over all feasible noise and disturbance trajectories $[d_\mathrm{ini}^\top, w^\top, d^\top]^\top$.
We first characterize feasible trajectories $[d_\mathrm{ini}^\top, w^\top]^\top $ such that $(u_{\mathrm{ini}}-d_{\mathrm{ini}}, y_{\mathrm{ini}}-w)$ is a trajectory of $\mathcal{G}$.
Similarly to the argument in Section~\ref{subsec.NoiseConstTransform}, $[d_\mathrm{ini}^\top, w^\top]^\top $ satisfies the above requirement if and only if there exists $g_{\mathrm{ini}}$ such that
\begin{equation}\label{eq.g_1uini}
  \begin{bmatrix}
    d_\mathrm{ini}\\
    w
  \end{bmatrix}=
	-\begin{bmatrix}
	U_p \\
	Y_p
      \end{bmatrix}
      g_\mathrm{ini} + \begin{bmatrix}
	u_\mathrm{ini} \\
	y_\mathrm{ini} \\
	\end{bmatrix}.
\end{equation}
Therefore, the set of noise and disturbance trajectories $[d_\mathrm{ini}^\top, w^\top]^\top $ that make $(u_\mathrm{ini} - d_\mathrm{ini}, y_\mathrm{ini}- w )$ a trajectory of $\mathcal{G}$ is
$$\tilde{\mathcal{W}}=\mathrm{range}(\begin{bmatrix}
	U_p \\
	Y_p
      \end{bmatrix}
)+\begin{bmatrix}
	u_\mathrm{ini}  \\
	y_\mathrm{ini} \\
	\end{bmatrix}.
        $$
Let  $\bar{n}_d=\mathrm{rank}(\left[\begin{smallmatrix}
	U_p  \\
	Y_p \\
	\end{smallmatrix}\right]
      )=\bar{n}_w+mT_\mathrm{ini}$.
      Similarly to the analysis in Section~\ref{subsec.ReducingDimension},  $[d_\mathrm{ini}^\top, w^\top]^\top \in\tilde{\mathcal{W}}$ if and only if there exists $g_w\in \mathbb{R}^{\bar{n}_d}$ such that
	\begin{equation}\label{eq.M_selection2}
          \begin{bmatrix}
            d_\mathrm{ini}\\
            w
          \end{bmatrix}
	=-\begin{bmatrix}
	U_p \\
	Y_p
      \end{bmatrix}
 \mathcal{R}(\begin{bmatrix}
	U_p \\
	Y_p
      \end{bmatrix}^\top)g_w+
\begin{bmatrix}
	u_\mathrm{ini}  \\
	y_\mathrm{ini} \\
	\end{bmatrix}.
	\end{equation}
        Moreover, the above mapping from $\mathbb{R}^{\bar{n}_d}$ to $\tilde{\mathcal{W}}$ is bijective.
                Therefore, from~\eqref{eq.M_selection2}, one gets 
\begin{equation}\label{eq.rho_parameterization}
	 \begin{bmatrix}
          \begin{bmatrix}
          d_{\mathrm{ini}} \\
          w
        \end{bmatrix}\\
	d
      \end{bmatrix} =
      \begin{bmatrix}
- \begin{bmatrix}
	U_p \\
	Y_p
      \end{bmatrix}
 \mathcal{R}(\begin{bmatrix}
	U_p \\
	Y_p
      \end{bmatrix}^\top)	 & \0 \\
		\0 & \I \\
              \end{bmatrix}
              \underbrace{\begin{bmatrix}
		g_w \\
		d 
		\end{bmatrix}}_{\bar{g}}+\begin{bmatrix}
                \begin{bmatrix}
		u_\mathrm{ini} \\
                		y_\mathrm{ini}                  
                \end{bmatrix}\\
		\0 
		\end{bmatrix},
\end{equation}
i.e., the vector $\bar{g}\in\mathbb{R}^{\bar{n}_d+mT_f}$ parameterizes all feasible noise and disturbance trajectories.
By following the arguments used in the proof of Lemma~\ref{lem.g_w_parameterization}, the quadratic constraints on $w$ and $\bar{d}$ can be transformed into quadratic constraints on $\bar{g}$ as
\begin{gather}\label{eq.gbar_const}
	\begin{bmatrix}
	1 \\
	w
	\end{bmatrix}^\top \Phi \begin{bmatrix}
	1 \\
	w
	\end{bmatrix}\geq 0 \iff 
	\begin{bmatrix}
	1 \\
	\bar{g}
	\end{bmatrix}^\top \bar{\Phi}_w \begin{bmatrix}
	1 \\
	\bar{g}
	\end{bmatrix}\geq 0 ,\\
	\begin{bmatrix}
	1 \\
	\bar{d}
	\end{bmatrix}^\top \Phi_d \begin{bmatrix}
	1 \\
	\bar{d}
	\end{bmatrix}\geq 0 \iff 
	\begin{bmatrix}
	1 \\
	\bar{g}
	\end{bmatrix}^\top \bar{\Phi}_d \begin{bmatrix}
	1 \\
	\bar{g}
	\end{bmatrix}\geq 0 ,\label{eq.gbar_const2}
\end{gather}
where the matrices $\bar{\Phi}_w$ and $\bar{\Phi}_d$ directly follow from~\eqref{eq.yini-noiseconstraint}, \eqref{eq.distconstraint}, and \eqref{eq.rho_parameterization}, and their expressions are omitted for brevity. 

Since every $[d_\mathrm{ini}^\top, w^\top]^\top \in \tilde{\mathcal{W}}$ can be written as~\eqref{eq.M_selection2}, by substituting this representation into~\eqref{eq.g_1uini}, we obtain that for a given $[d_\mathrm{ini}^\top, w^\top]^\top$, the solution $g_{\mathrm{ini}}$ to~\eqref{eq.g_1uini} is given by $  g_{\mathrm{ini}}=M_dg_w$, where $M_d=\mathcal{R}(\left[\begin{smallmatrix}
	U_p \\
	Y_p
\end{smallmatrix}\right]^\top)$.
We can follow the procedure in Section~\ref{subsec.PerformanceTransformation} to derive the solution $g = g_\mathrm{ini}+g_u$ to the first three equations in~\eqref{eq.yini-model-disturbance}, where $g_u=\Lambda^\top (\Lambda\Lambda^\top)^{-1}\begin{bmatrix}
\mathbf{0}&\mathbf{0}&(\check{u}-U_fg_\mathrm{ini})^\top
\end{bmatrix}^\top$ verifies~\eqref{eq.g_u} with the noisy control input $\check{u}$ instead of $u$. 
Then, since $y=Y_fg$, the following holds with the matrices $B_\mathrm{ini}$ and $B_u$ defined in Lemma~\ref{lem.y_LQTE_parameterization}
\begin{align}
y &= B_\mathrm{ini}g_\mathrm{ini} + B_u\check{u} = B_\mathrm{ini}M_dg_w-B_ud+B_uu \nonumber \\
&= \underbrace{\begin{bmatrix}
  B_\mathrm{ini}M_d&-B_u
\end{bmatrix}}_{\bar{B}_g}\bar{g} + B_uu.
\end{align}
Since $\check{u} = u-\Xi\bar{g},$ where $\Xi = [\0, \I]$, the performance constraint can be rewritten as
\begin{align*}
&\gamma-  \sum_{k=0}^{T_f-1}\left(\left\|y_{k}-r_{t+k}\right\|_{Q}^{2}+\left\|\check{u}_{k}\right\|_{R}^{2}\right) \\
& = \gamma - \check{u}^\top\bar{R}\check{u}-(y-r)^\top\bar{Q}(y-r)\\
&=  \gamma - \left(u-\Xi\bar{g}\right)^\top\bar{R}\left(u-\Xi\bar{g}\right)\\
&-(\bar{B}_g\bar{g}+B_uu-r)^\top\bar{Q}(\bar{B}_g\bar{g}+B_uu-r)\\
&=  \begin{bmatrix}
1 \\
\bar{g}
\end{bmatrix}^\top
%\underbrace{\begin{bmatrix}
%	{[\bar{Q}_g]}_{11} & {[\bar{Q}_g]}_{12}& {[\bar{Q}_g]}_{13}& {[\bar{Q}_g]}_{14}\\
%	{[\bar{Q}_g]}_{12}^\top & {[\bar{Q}_g]}_{22} & {[\bar{Q}_g]}_{23}& {[\bar{Q}_g]}_{24} \\
%	{[\bar{Q}_g]}_{13}^\top & {[\bar{Q}_g]}_{23}^\top & {[\bar{Q}_g]}_{33} & {[\bar{Q}_g]}_{34} \\
%	{[\bar{Q}_g]}_{14}^\top & {[\bar{Q}_g]}_{24}^\top & {[\bar{Q}_g]}_{34}^\top & {[\bar{Q}_g]}_{44}
%	\end{bmatrix}}_{\bar{Q}_g(u,\gamma)}
\underbrace{\begin{bmatrix}
	[\bar{Q}_g]_{11} & [\bar{Q}_g]_{12} \\
	[\bar{Q}_g]_{12}^\top & [\bar{Q}_g]_{22}   
	\end{bmatrix}}_{\bar{Q}_g(u,\gamma)}
\begin{bmatrix}
1\\
\bar{g}
\end{bmatrix}\ge 0,
\end{align*}
where
\begin{equation*}
	\begin{split}
	[\bar{Q}_g]_{11} &= \gamma - u^\top\bar{R}u-\left(B_uu-r\right)^\top\bar{Q}\left(B_uu-r\right), \\
	[\bar{Q}_g]_{12} &= u^\top\bar{R}\Xi -\left(B_uu-r\right)^\top\bar{Q}\bar{B}_g, \\
	[\bar{Q}_g]_{22} &= -\Xi^\top\bar{R}\Xi - \bar{B}_g^\top\bar{Q} \bar{B}_g.
%	[\bar{Q}_g]_{11} &= \gamma - u^\top\bar{R}u-\left(B_uu-r\right)^\top\bar{Q}\left(B_uu-r\right) \\
%	[\bar{Q}_g]_{12} &= -\left(B_uu-r\right)^\top\bar{Q}B_\mathrm{ini}M_d \\
%	[\bar{Q}_g]_{13} &= -u^\top\bar{R}-\left(B_uu-r\right)^\top\bar{Q}B_u \\
%	[\bar{Q}_g]_{14} &= -\left(B_uu-r\right)^\top\bar{Q}B_\mathrm{ini}M \\
%	[\bar{Q}_g]_{22} &= -M^{*\top} B_\mathrm{ini}^\top\bar{Q}B_\mathrm{ini}M_d 	\quad	[\bar{Q}_g]_{23} = -M^{*\top} B_\mathrm{ini}^\top\bar{Q}B_u  \\
%	[\bar{Q}_g]_{24} &= -M^{*\top} B_\mathrm{ini}^\top\bar{Q}B_\mathrm{ini}M  \quad [\bar{Q}_g]_{33} = -\bar{R}-B_u^\top\bar{Q}B_u  \\
%	[\bar{Q}_g]_{34} &= -B_u^\top\bar{Q}B_\mathrm{ini}M  \quad	[\bar{Q}_g]_{44} = -M^\top B_\mathrm{ini}^\top\bar{Q}B_\mathrm{ini}M 
	\end{split}
\end{equation*}

As such, the overall data-driven robust control objective is to find $u$ and $\gamma$ such that $$\begin{bmatrix}
1 \\
\bar{g}
\end{bmatrix}^\top\bar{Q}_g(u,\gamma)\begin{bmatrix}
1 \\
\bar{g}
\end{bmatrix}\geq 0$$
holds for all feasible noise and disturbance trajectories parameterized by $\bar{g}$ satisfying  quadratic constraints~\eqref{eq.gbar_const},~\eqref{eq.gbar_const2}.
Using the S-lemma for multiple quadratic inequalities~\cite{polik2007survey}, this is true if there exist $u$, $\alpha_w\geq0$, and $\alpha_d\geq0$ such that
\begin{align}
\bar{Q}_g(u,\gamma)-\alpha_w\bar{\Phi}_w-\alpha_d\bar{\Phi}_d\geq 0.
  \label{eq.finalLMI_dist}
\end{align}
We can further convert the above inequality into an LMI through the Schur complement.
Therefore, the problem \textbf{P1} with input disturbances is solved if the following optimization problem is solved
\begin{equation*} \label{eq.finalOpt_dist}
\begin{split}
\min_{u, \gamma, \alpha_w\geq 0,\alpha_d\geq0}\; \gamma \quad \text{s.t.},~\eqref{eq.finalLMI_dist}.
\end{split}
\end{equation*}

\vspace{-0.3cm}\subsection{Co-existence of Quadratic Input/Output Constraints and Actuator Disturbance}\label{subsec.FinalControlDesignProblem}

In this section, we use the results in Sections~\ref{subsec.IOConstraints} and~\ref{subsec.InputDist} for dealing simultaneously with the quadratic input/output constraints and actuator disturbances.
The overall robust control problem is given by
\begin{subequations}
  \label{eq.overall_minimax}
\begin{align}
\min_u \max_{w,d_\mathrm{ini},d}&\quad \sum_{k=0}^{T_f-1}\left(\left\|y_{k}-r_{k}\right\|_{Q}^{2}+\left\|u_{k}-d_k\right\|_{R}^{2}\right)  \\
\text{subject to}&\quad \begin{bmatrix}
U_p \\
Y_p \\
U_f \\
Y_f
\end{bmatrix} g=\begin{bmatrix}
u_{\mathrm{ini}} \\
y_{\mathrm{ini}} \\
u \\
y
\end{bmatrix}-\begin{bmatrix}
d_\mathrm{ini} \\
w \\
d \\
\0
\end{bmatrix}, \\
%& \quad d_\mathrm{ini}, w, d \text{ are feasible noise trajectories}, \\
& \quad \begin{bmatrix}
	1 \\
	u-d
	\end{bmatrix}^\top\underbrace{\begin{bmatrix}
		\Psi_{11} & \Psi_{12} \\
		\Psi_{12}^\top & \Psi_{22}
		\end{bmatrix}}_{\Psi}\begin{bmatrix}
	1 \\
	u-d
	\end{bmatrix}\geq 0, \label{eq.I_cons_quad} \\
& \quad \text{output quadratic constraints }\eqref{eq.O_cons_quad}.
\end{align}
\end{subequations}
The following theorem provides an LMI-based representation of~\eqref{eq.overall_minimax}\footnote{Even though the matrix inequalities in the theorem and proof are not LMIs, they can be transformed to LMIs using Schur complement in a similar way to the proof of Theorem~\ref{thm.LMIopt}. To save space, the resulting LMIs are not displayed. Furthermore, we refer to these matrix inequalities as LMIs without ambiguity.}.
 
 \begin{theorem}\label{thm.LMIopt_overall}
Denote $\alpha\triangleq [\alpha_w,\alpha_d,\alpha_{u,w},\alpha_{u,d},\alpha_{y,w},\alpha_{y,d}]^\top$. The optimization problem~\eqref{eq.overall_minimax} is solved when the following minimization problem is solved
 	\begin{equation} \label{eq.finalOpt_overall}
 	\min_{u,\gamma, \alpha\geq 0}\; \gamma \quad
 	\text{s.t.},~\eqref{eq.finalLMI_dist},~\eqref{eq.I_cons_dist_LMI},~\eqref{eq.O_cons_dist_LMI}.
      \end{equation}
      where constraints~\eqref{eq.I_cons_dist_LMI} and~\eqref{eq.O_cons_dist_LMI} are given in the proof.
 \end{theorem}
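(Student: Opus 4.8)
The plan is to follow the pattern already established in the previous two subsections: carry the disturbance parameterization of Section~\ref{subsec.InputDist} and the constraint-handling device of Section~\ref{subsec.IOConstraints} all the way through, and then invoke the S-lemma for multiple quadratic inequalities once more, separately for each of the three robust requirements (the tracking-error bound, the input constraint~\eqref{eq.I_cons_quad}, and the output constraint~\eqref{eq.O_cons_quad}). Since the data-dependent model is still~\eqref{eq.yini-model-disturbance}, the only genuinely new work is to rewrite the two quadratic constraints as quadratic forms in the common parameter vector $\bar{g}$.

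First I would recall from~\eqref{eq.rho_parameterization} that every feasible noise-and-disturbance triple $(d_{\mathrm{ini}},w,d)$ is parameterized by a single free vector $\bar{g}\in\mathbb{R}^{\bar{n}_d+mT_f}$, that the quadratic bounds~\eqref{eq.yini-noiseconstraint} on $w$ and~\eqref{eq.distconstraint} on $\bar{d}$ become the quadratic constraints on $\bar{g}$ in~\eqref{eq.gbar_const}--\eqref{eq.gbar_const2} with coefficient matrices $\bar{\Phi}_w,\bar{\Phi}_d$, and that Section~\ref{subsec.InputDist} already supplies $\check{u}=u-\Xi\bar{g}$ and $y=\bar{B}_g\bar{g}+B_uu$. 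With these substitutions the tracking-error bound $\mathrm{LQTE}\le\gamma$ is precisely the requirement that the quadratic form in $\bar{g}$ with coefficient matrix $\bar{Q}_g(u,\gamma)$ be nonnegative, as derived there; this matrix is affine in $\gamma$ and, after a Schur complement identical to the one in the proof of Theorem~\ref{thm.LMIopt}, affine in $u$ as well, once the quadratic-in-$u$ block is moved across.

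Next I would substitute the same expressions into the remaining two constraints. Putting $u-d=u-\Xi\bar{g}$ into the quadratic form defined by $\Psi$ in~\eqref{eq.I_cons_quad} and collecting powers of $\bar{g}$ gives a symmetric matrix $\bar{\Psi}_g(u)$ such that~\eqref{eq.I_cons_quad} holds if and only if the quadratic form in $\bar{g}$ with coefficient $\bar{\Psi}_g(u)$ is nonnegative; likewise, putting $y=\bar{B}_g\bar{g}+B_uu$ into~\eqref{eq.O_cons_quad} produces a symmetric matrix $\bar{\Theta}_g(u)$ — the evident analogue of the matrix $\Theta_g$ of Section~\ref{subsec.IOConstraints}, with $B_wg_w$ replaced by $\bar{B}_g\bar{g}$ and $y_0$ absorbed into $B_uu$ — with the corresponding ``if and only if''. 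For a fixed $u$, requiring each of $\bar{Q}_g(u,\gamma)$, $\bar{\Psi}_g(u)$, $\bar{\Theta}_g(u)$ to define a nonnegative quadratic form for every $\bar{g}$ verifying~\eqref{eq.gbar_const}--\eqref{eq.gbar_const2}, the S-lemma for multiple quadratic inequalities~\cite{polik2007survey} yields the sufficient matrix inequalities $\bar{Q}_g(u,\gamma)-\alpha_w\bar{\Phi}_w-\alpha_d\bar{\Phi}_d\ge0$ (which is~\eqref{eq.finalLMI_dist}), $\bar{\Psi}_g(u)-\alpha_{u,w}\bar{\Phi}_w-\alpha_{u,d}\bar{\Phi}_d\ge0$ (which defines~\eqref{eq.I_cons_dist_LMI}), and $\bar{\Theta}_g(u)-\alpha_{y,w}\bar{\Phi}_w-\alpha_{y,d}\bar{\Phi}_d\ge0$ (which defines~\eqref{eq.O_cons_dist_LMI}), all with nonnegative multipliers; a final Schur complement turns them into genuine LMIs. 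Stacking the multipliers into $\alpha$ and minimizing $\gamma$ gives~\eqref{eq.finalOpt_overall}; a solution of the latter furnishes a control $u$ that is feasible for~\eqref{eq.overall_minimax} and achieves worst-case cost at most $\gamma$.

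The main obstacle is conceptual rather than computational: with two quadratic constraints ($\bar{\Phi}_w$ and $\bar{\Phi}_d$) simultaneously in force, the S-procedure is in general only sufficient, not lossless, so~\eqref{eq.finalOpt_overall} provides an inner, possibly conservative, approximation of the feasible set of~\eqref{eq.overall_minimax} — which is exactly why the statement reads ``is solved when'' rather than ``is equivalent to''. The one technical point to verify along the way is that $\check{u}=u-\Xi\bar{g}$ and $y=\bar{B}_g\bar{g}+B_uu$ are consistent with~\eqref{eq.yini-model-disturbance} for every $\bar{g}$; this follows by applying Lemma~\ref{lemma.Yp2LinearCombination} with $\check{u}$ in place of $u$, exactly as done in Section~\ref{subsec.InputDist}.
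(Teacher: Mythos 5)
Your proposal is correct and follows essentially the same route as the paper: parameterize all feasible $(d_{\mathrm{ini}},w,d)$ by $\bar{g}$ via~\eqref{eq.rho_parameterization}, rewrite the tracking-error bound and the input/output constraints as quadratic forms in $\bar{g}$ using $\check{u}=u-\Xi\bar{g}$ and $y=\bar{B}_g\bar{g}+B_uu$, apply the S-lemma with the two multipliers per constraint to obtain~\eqref{eq.finalLMI_dist},~\eqref{eq.I_cons_dist_LMI},~\eqref{eq.O_cons_dist_LMI}, and convert via Schur complement. Your observation that the multi-constraint S-procedure is only sufficient, so that~\eqref{eq.finalOpt_overall} is conservative and the theorem reads ``is solved when'' rather than ``is equivalent to,'' matches the paper's own remark exactly.
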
 

 \begin{proof}
   Similarly to the proof of Theorem~\ref{thm.LMIopt}, we aim to minimize $\gamma$, subject to the tracking error constraint and the constraint that the input/output constraints hold for all feasible noise and disturbance trajectories.
   The tracking error constraint can be shown to be given as~\eqref{eq.finalLMI_dist}.
   In the following, we show how to characterize  the constraint that the input/output constraints hold for all feasible noise and disturbance trajectories.

   In light of $\check{u}=u-\Xi\bar{g}$, one sees that the input constraint in~\eqref{eq.I_cons_quad} is equivalent to
 
 \begin{equation}\label{eq.I_cons_dist}
 	\bar{\psi}(\bar{g})=\begin{bmatrix}
 	1 \\
 	\bar{g}
 	\end{bmatrix}^\top\underbrace{\begin{bmatrix}
 		[\bar{\Psi}]_{11} & [\bar{\Psi}]_{12} \\
 		[\bar{\Psi}]_{12}^\top & [\bar{\Psi}]_{22}
 		\end{bmatrix}}_{\bar{\Psi}}\begin{bmatrix}
 	1 \\
 	\bar{g}
 	\end{bmatrix}\geq 0,
 \end{equation}
 where 
 \begin{equation}\label{eq.barPsi}
 	\begin{split}
 	[\bar{\Psi}]_{11} &= \Psi_{11}+u^\top\Psi_{22}u+\Psi_{12}u+u^\top\Psi_{12}^\top, \\
 	[\bar{\Psi}]_{12} &=- \Psi_{12}\Xi-u^\top\Psi_{22}\Xi, \qquad [\bar{\Psi}]_{22} = \Xi^\top\Psi_{22}\Xi.
 	\end{split}
 \end{equation}
  We need to ensure that~\eqref{eq.I_cons_dist} holds for all $\bar{g}$ satisfying the quadratic constraints~\eqref{eq.gbar_const}, \eqref{eq.gbar_const2}.
 In view of the S-lemma, this is possible if there exist $u$, $\alpha_{u,w}\geq 0$, and $\alpha_{u,d}\geq 0$ such that 
 \begin{align}~\label{eq.I_cons_dist_LMI}
\bar{\Psi}-\alpha_{u,w}\bar{\Phi}_w-\alpha_{u,d}\bar{\Phi}_d\geq 0,   
 \end{align}
 which can be converted to an LMI using the Schur complement.
 Similarly, considering $y = \bar{B}_g\bar{g} + B_uu$, the output constraint in~\eqref{eq.O_cons_quad} is equivalent to
\begin{equation}\label{eq.O_cons_dist}
\bar{\theta}(\bar{g})=\begin{bmatrix}
1 \\
\bar{g}
\end{bmatrix}^\top\underbrace{\begin{bmatrix}
	[\bar{\Theta}]_{11} & [\bar{\Theta}]_{12} \\
	[\bar{\Theta}]_{12}^\top & [\bar{\Theta}]_{22}
	\end{bmatrix}}_{\bar{\Theta}}\begin{bmatrix}
1 \\
\bar{g}
\end{bmatrix}\geq 0,
\end{equation}
where 
\begin{equation}\label{eq.barTheta}
\begin{split}
[\bar{\Theta}]_{11} &= \Theta_{11}+u^\top B_u^\top\Theta_{22}B_uu+\Theta_{12}B_uu+u^\top B_u^\top\Theta_{12}^\top, \\
[\bar{\Theta}]_{12} &= \Psi_{12}\bar{B}_g+u^\top B_u^\top \Theta_{22}\bar{B}_g, \qquad [\bar{\Theta}]_{22} = \bar{B}_g^\top\Theta_{22}\bar{B}_g.
\end{split}
\end{equation}
Following the same procedure as for input constraint, it can be shown that~\eqref{eq.O_cons_dist} is satisfied if there exist $u$, $\alpha_{y,w}\geq 0$, and $\alpha_{y,d}\geq 0$ such that the following is satisfied
\begin{equation}\label{eq.O_cons_dist_LMI}
\bar{\Theta} -\alpha_{y,w}\bar{\Phi}_w-\alpha_{y,d}\bar{\Phi}_d\geq 0,   
\end{equation}
which can be converted to an LMI using the Schur complement.
Combining the above results, we get~\eqref{eq.finalOpt_overall}.
\end{proof} 

\begin{remark}
	The extensions presented in this section involve the use of the S-lemma with multiple quadratic constraints~\cite{polik2007survey} and Schur complement with semidefinite matrices~\cite{boyd1994linear}, which are only sufficient. Therefore, the control design procedure in~\eqref{eq.finalOpt_overall} is conservative, i.e., it may have no solution even though a control input $u$ solving the min-max control problem~\eqref{eq.overall_minimax} exists.
\end{remark}

\begin{remark}
  The proposed control design can be easily applied in a receding horizon fashion, in order to implement a data-driven predictive controller. In doing so, at each time instance, one needs to update the output reference $r$ as well as recent input and output data $u_\mathrm{ini}$ and $y_\mathrm{ini}$ with the online data, solve~\eqref{eq.finalOpt_overall}, and apply only the first control input from the computed optimal control sequence $u$. Moreover, it can be shown that the resulting controller is equivalent to a robust model predictive controller (MPC) with bounded uncertainty on the initial state. As such, the stability of the resulting closed-loop system can be studied using the existing results on robust MPC. Such a discussion is omitted so as to emphasize the robust data-driven nature of the proposed controller, which is the main contribution of this paper.
%  extended to MPC settings, similar to the DeePC controller~\cite{coulson2019data}.
%In doing so, we need to replace the reference and initial trajectories in \textbf{P1} with the online data.
%At time $t$, $\{r_k\}_{k=0}^{T_f-1}$ should be replaced by $\{r_{k}\}_{k=t}^{t+T_f-1}$, representing the finite-horizon online tracking reference.
%$u_{\mathrm{ini}}$ and $y_{\mathrm{ini}}$ should be replaced by $u_{t,\mathrm{ini}}$ and $y_{t,\mathrm{ini}}$, which correspond to the sequences $\{ u_k\}_{k=t-T_{\mathrm{ini}}}^{t-1}$ and $\{y_{k}\}_{k=t-T_{\mathrm{ini}}}^{t-1} $, respectively, representing the online recent input-output data.
%Based on Theorem~\ref{thm.LMIopt}, at every step, we only need to solve an LMI optimization problem to find the MPC controller.
\end{remark}

%\begin{remark}
%	When $\bar{u}, \bar{y}$ are noisy, a heuristic method is to first filter the data $\bar{u}, \bar{y}$ such that $[U_p^\top, Y_p^\top, U_f^\top, Y_f^\top]^\top$ is a data dependent representation of a system, then solve the robust control problem as formulated in this paper.
%	This system identification with data dependent representation has been studied in~\cite{MarkovskyIvan2013CEP} and there are available software packages.
%\end{remark}

\section{Simulations}\label{subsec.Simulation}

%We verify our controller on computer simulations in two scenarios: a random LTI system and a nonlinear quadcopter. In the first scenario, which satisfies all our modeling assumptions, we show that the robustness certificates of our control design algorithm are verified. In the second scenario, we use the data from a nonlinear quadcopter model around hovering equilibrium to present a realistic case where our tracking controller still works despite losing the robustness certificates. 

We illustrate the performance of the proposed controller through numerical simulations.
% , where
%we show that the robustness certificates of our control design algorithm are verified.
%\subsection{Robust Tracking Control of an LTI System} \label{sec.SimLTI}
We consider an unstable LTI system~\eqref{eq.LTI_system_ss} with randomly selected system matrices 
\begin{equation*}
	\begin{split}
	A &= \begin{bmatrix}
	0.6799 & -0.0331 & -0.8332 & 0.4924\\
	0.9748 & 1.0060 & 0.3666 & 0.5863\\
	0.7311 & 0.3693 & -1.0711 & 0.1603\\
  -0.7442 & 0.0330 & 0.0667 & 0.1961
	\end{bmatrix}, \\
	 B &= \begin{bmatrix}
	-0.7841 & -0.1798 & -0.0757\\
	  0.5204 & -0.5806 & -0.6510\\
	  0.1974 & 0.2140 & -0.4851\\
	-0.9378 & 0.7881 & -0.1826
	\end{bmatrix}, \\
	C &= \begin{bmatrix}
	0.4458 & 0.4911 & 0.7394 & -0.1359\\
	0.0733 & -0.1468 & -0.6357 & 0.7353
	\end{bmatrix}, \enskip D=\0.
	\end{split}
\end{equation*}
By choosing a random initial condition, historical input-output data of length $T_d=110$ are collected with inputs generated from a uniform distribution in the interval $[-1,1]$. We assume that the exact order $n=4$ of the system is unknown and only the upper bound $\bar{n}=6$ is available. Consequently, \textit{recent} input-output data of length~$T_\mathrm{ini}=\bar{n}=6\geq\mathbf{l}(\mathcal{G})$ are collected with inputs from  the uniform distribution in $[-1, 1]$. Moreover, recent data is corrupted by input disturbances and output noises as in Section~\ref{subsec.FinalControlDesignProblem}, where the trajectories $w$ and $\bar{d}$ are selected to satisfy quadratic constraints \eqref{eq.yini-noiseconstraint} and \eqref{eq.distconstraint}, respectively, with  
\begin{equation}\label{eq.simNoisePara}
\begin{split}
\Phi_{11} = T_\mathrm{ini}p\epsilon,\enskip &\Phi_{12}=\0,\enskip \Phi_{22} = -\I, \\
\Phi_{d,11} = \left(T_\mathrm{ini}+T_f\right)m&\epsilon,\enskip \Phi_{d,12}=\0,\enskip \Phi_{d,22} = -\I, 
\end{split}
\end{equation}
and $\epsilon=0.001$.
We are interested in 
%computing the control action $u$ that minimizes the worst-case linear quadratic tracking error for all possible disturbance and noise trajectories compliant with the behavior of the system, while ensuring that input and output signals always satisfy quadratic constraints. In particular, we are interested in 
solving the min-max problem~\eqref{eq.overall_minimax}.
%\begin{equation}\label{eq.simLTI_minimax}
%\begin{split}
%\gamma^*=\min_u \max_{w,d_\mathrm{ini},d,g} &\sum_{k=0}^{T_f-1}\left(\left\|y_{k}-r_{k}\right\|_{Q}^{2}+\left\|u_{k}+d_k\right\|_{R}^{2}\right)  \\
%s.t., & \begin{bmatrix}
%U_p \\
%Y_p \\
%U_f \\
%Y_f
%\end{bmatrix} g=\begin{bmatrix}
%u_{\mathrm{ini}} \\
%y_{\mathrm{ini}} \\
%u \\
%y
%\end{bmatrix}-\begin{bmatrix}
%-d_\mathrm{ini} \\
%w \\
%-d \\
%\0
%\end{bmatrix} \\
%&w\text{ satisfies}~\eqref{eq.yini-noiseconstraint} \\
%&\bar{d}=\begin{bmatrix}
%d_\mathrm{ini} \\
%d
%\end{bmatrix}\text{ satisfies}~\eqref{eq.distconstraint} \\
%&\bar{u}=u+d\text{ and }y\text{ satisfy}~\eqref{eq.IO_cons_quad}
%\end{split},
%\end{equation}
%where $Q=\I$, $R=\I$, and the input and output constraint matrices in~\eqref{eq.IO_cons_quad} are chosen as
%\begin{equation}
%\begin{split}
%\Psi_{11} = 0.5T_f,\enskip &\Psi_{12}=\0,\enskip \Psi_{22} = -\I \\
%\Theta_{11} = 0.5T_f,\enskip &\Theta_{12}=\0,\enskip \Theta_{22} = -\I 
%\end{split}.
%\end{equation}
We select $T_f=20$, $r=\0$, $Q=\I$, and $R=\I$ to robustly regulate the output of the system to zero within a horizon of length $20$. Moreover, we seek to do so while ensuring that $\check{u}$ and $y$ satisfy quadratic constraints~\eqref{eq.I_cons_quad},~\eqref{eq.O_cons_quad} with
\begin{equation*}
\begin{split}
\Psi_{11} = T_f\epsilon_u,\enskip &\Psi_{12}=\0,\enskip \Psi_{22} = -\I, \\
\Theta_{11} = T_f\epsilon_y,\enskip &\Theta_{12}=\0,\enskip \Theta_{22} = -\I,
\end{split}
\end{equation*}
and $\epsilon_u=\epsilon_y=0.5$.
%Note that the problem~\eqref{eq.overall_minimax} gets more and more difficult to solve with increasing $T_f$, as the system is unstable and, therefore, the effects of noise and disturbance on $y$ increase with $T_f$. This, in turn, makes it increasingly difficult to satisfy the input and output constraints. 

As shown in Theorem~\ref{thm.LMIopt_overall}, it is possible to convert this robust control input design problem into the minimization problem~\eqref{eq.finalOpt_overall}. This problem is then solved using Yalmip~\cite{Lofberg2004} on Matlab with MOSEK~\cite{mosek} specified as the solver, which returns the optimal control sequence $u$. This control sequence is tested with multiple compatible realizations of noise trajectories. In particular, we randomly select $100$ vectors $\bar{g}$ that satisfy the quadratic constraints~\eqref{eq.gbar_const} and \eqref{eq.gbar_const2}, which, in view of~\eqref{eq.rho_parameterization}, parameterize $100$ feasible sequences of $w$ and $\bar{d}$.
As shown in Figure~\ref{fig.LTI_outputs}, output trajectories are quickly brought around zero for all noise and disturbance realizations. 
Moreover, Figure~\ref{fig.LTI_constraints} displays the robustness of the closed-loop system. Specifically, the first plot in Figure~\ref{fig.LTI_constraints} shows that $\gamma\leq \gamma^*$ for all selected noise and disturbance realizations, where each blue circle corresponds to a specific realization and  $\gamma^*$ is the optimal value of~\eqref{eq.finalOpt_overall}. Moreover, the second and third plots show that the input and output constraints are satisfied for all selected noise and disturbance trajectories, i.e., $\psi(\check{u})\geq 0$ and $\theta(y)\geq 0$, respectively. 

\begin{figure}[t]
	\includegraphics[width=0.45\textwidth]{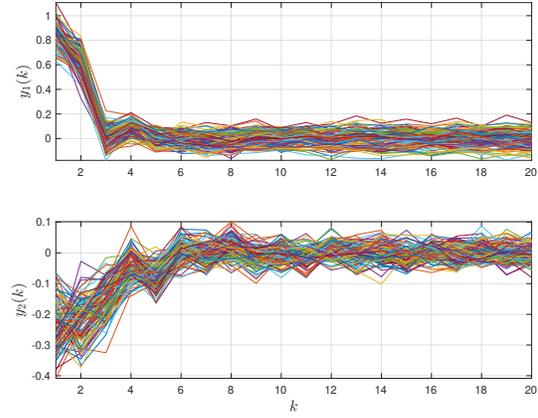}
	\caption{Output responses with the designed robust control under different feasible noise and disturbance trajectories.} \label{fig.LTI_outputs}
\end{figure}

\begin{figure}[t]
	\includegraphics[width=0.45\textwidth]{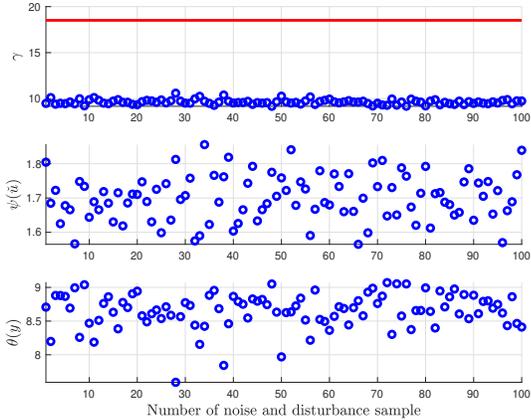}
	\caption{Robustness against different noise and disturbance trajectories. Top: Tracking errors $\gamma=y^\top Q y+\check{u}^\top R \check{u}$ (blue circles), and the optimal value $\gamma^*$ (red line) computed from~\eqref{eq.finalOpt_overall}. Middle: Values of the input constraint $\psi(\check{u})$ computed as in~\eqref{eq.I_cons_quad}. Bottom: Values of the output constraint $\theta(y)$ computed as in~\eqref{eq.O_cons_quad}.} \label{fig.LTI_constraints}
      \end{figure}
      
%\begin{remark}
When parameterizing the noise trajectories as in~\eqref{eq.rho_parameterization}, the parameterization methods proposed in Sections~\ref{subsec.ReducingDimension} and \ref{subsec.InputDist} allow for a significant reduction in the sizes of the LMI conditions~\eqref{eq.finalLMI_dist}, \eqref{eq.I_cons_dist_LMI}, and \eqref{eq.O_cons_dist_LMI}. In particular, the size of each LMI condition is reduced from $291$ to $143$.
%\end{remark}

It is seen from Figure~\ref{fig.LTI_constraints} that the obtained $\gamma$ values are not as high as the optimal value $\gamma^*$. Moreover, the input and output constraints are not active in any of the different simulation scenarios, i.e., $\psi(\check{u})\neq 0$ and $\theta(y)\neq 0$. These limitations are due to the fact that the version of S-lemma for multiple quadratic inequalities
and the semidefinite version of Schur complement used in Section~\ref{subsec.FinalControlDesignProblem} are conservative. In order to demonstrate that they are the only sources of conservativity, we run another simulation with the same LTI system, in which we do not consider actuator disturbances and input/output constraints.
Specifically, the same \textit{historical} data as in the previous simulation are used to construct the Hankel matrices $U_p$, $U_f$, $Y_p$, and $Y_f$. Moreover, the same input sequence $u_\mathrm{ini}$ is used to generate the \textit{recent} trajectory. Differently to the previous case, only the recent output trajectory $y_\mathrm{ini}$ is affected by measurement noise $w$. This noise is chosen to satisfy~\eqref{eq.yini-noiseconstraint} with the  matrix $\Phi$ defined by~\eqref{eq.simNoisePara}. The $T_f$, $r$, $Q$, and $R$ of the previous example are chosen to ensure robust regulation of system output to zero. By utilizing the results of Section~\ref{sec.RobustControlDesign}, we solve the problem~\eqref{eq.finalOpt} with the matrix $M$ defined in~\eqref{eq.M_selection}. Similarly to the previous simulation, the calculated control input $u$ is used to control the system with $100$ different realizations of the vector $g_w$ parameterizing different feasible noise trajectories $w$. The results of this simulation are presented in Figure~\ref{fig.LTI_constraints_2}, where one sees that the tracking costs $\gamma$ in blue circles are smaller than the robust optimal tracking cost $\gamma^*$ computed from~\eqref{eq.finalOpt} for all feasible noise trajectories. It can also be seen from this figure that some $\gamma$ values get quite close to $\gamma^*$, hence supporting the claim that Theorem~\ref{thm.LMIopt} in Section~\ref{sec.RobustControlDesign} is not conservative.

\begin{figure}[t]
	\includegraphics[width=0.45\textwidth]{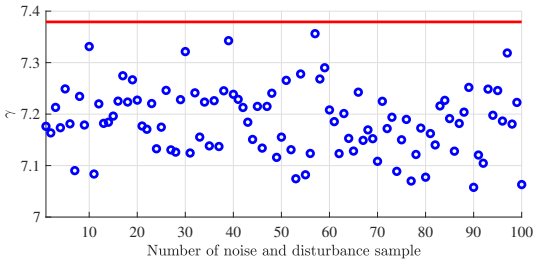}
	\caption{Tracking errors $\gamma=y^\top Q y+u^\top R u$ (blue circles), and the optimal value $\gamma^*$ (red line) computed from~\eqref{eq.finalOpt}.} \label{fig.LTI_constraints_2}
\end{figure}

%\subsection{Data-driven MPC on a Nonlinear Quadcopter} \label{subsec.SimQuad}

\section{Conclusions}\label{sec.Conclusion}
Willems' Fundamental Lemma shows that finite-length persistently exciting data can characterize the behaviors of linear systems, which enables data-driven simulation and control.
In this paper, we build on this data-dependent representation to consider the case that the recent output data are noisy and solve worst-case robust optimal tracking control problems in a data-driven fashion.
The key ingredient of our approach is a suitable parameterization of the feasible noise trajectories and the performance specification, which allows one to express them as quadratic constraints.
Then, by applying the S-lemma, we show that the worst-case robust control problem is equivalent to a minimization problem with a linear cost and LMI constraints.
Moreover, by carefully selecting the noise parameterization, we can show that the dimension of the LMI optimization problem does not scale with the length of historical data.
Our method can also easily incorporate input and output constraints, as well as actuator disturbances.

At present, the proposed method assumes that noise affects only recent data.
Future work will be devoted to generalizations accounting for noise also in historical data.

\bibliographystyle{ieeetr}
\bibliography{references}

\begin{thebibliography}{10}

\bibitem{DePersis2020TAC}
C.~De~Persis and P.~Tesi, ``Formulas for data-driven control: Stabilization,
  optimality, and robustness,'' {\em IEEE Transactions on Automatic Control},
  vol.~65, no.~3, pp.~909--924, 2020.

\bibitem{vanWaardeHenkJ2020TAC}
H.~J. Van~Waarde, J.~Eising, H.~L. Trentelman, and M.~K. Camlibel, ``Data
  informativity: a new perspective on data-driven analysis and control,'' {\em
  IEEE Transactions on Automatic Control}, vol.~65, no.~11, pp.~4753--4768,
  2020.

\bibitem{MatniNikolai2019CDCLearningControlSurvey}
N.~Matni, A.~Proutiere, A.~Rantzer, and S.~Tu, ``From self-tuning regulators to
  reinforcement learning and back again,'' in {\em Proceedings of the 58th IEEE
  Conference on Decision and Control}, (Nice, France), pp.~3724--3740, 2019.

\bibitem{StephenTu2019Thesis}
S.~L. Tu, {\em Sample Complexity Bounds for the Linear Quadratic Regulator}.
\newblock Thesis, UC Berkeley, 2019.

\bibitem{Mishra2020DataDrivenControllabilityTest}
V.~K. Mishra, I.~Markovsky, and B.~Grossmann, ``Data-driven tests for
  controllability,'' {\em IEEE Control Systems Letters}, vol.~5, no.~2,
  pp.~517--522, 2020.

\bibitem{Koch2020DataVerifyDispassivity}
A.~Koch, J.~Berberich, and F.~Allgöwer, ``Verifying dissipativity properties
  from noise-corrupted input-state data,'' in {\em Proceedings of the 59th IEEE
  Conference on Decision and Control}, (Jeju Island, Korea), pp.~616--621,
  2020.

\bibitem{Romer2019DataVerifyPassivity}
A.~Romer, J.~Berberich, J.~Köhler, and F.~Allgöwer, ``One-shot verification
  of dissipativity properties from input–output data,'' {\em IEEE Control
  Systems Letters}, vol.~3, no.~3, pp.~709--714, 2019.

\bibitem{vanWaardeHenk2020ArXiv}
H.~J. van Waarde and M.~Mesbahi, ``Data-driven parameterizations of suboptimal
  {LQR} and {H2} controllers,'' {\em arXiv preprint arXiv:1912.07671}, 2020.

\bibitem{HarryTrentelman2020ArXiv}
H.~L. Trentelman, H.~J. van Waarde, and M.~K. Camlibel, ``An informativity
  approach to data-driven tracking and regulation,'' {\em arXiv preprint
  arXiv:2009.01552}, 2020.

\bibitem{berberich2020robust}
J.~Berberich, A.~Koch, C.~W. Scherer, and F.~Allg{\"o}wer, ``Robust data-driven
  state-feedback design,'' in {\em Proceedings of the 2020 American Control
  Conference}, (Denver, CO, USA), pp.~1532--1538, IEEE, 2020.

\bibitem{BisoffiAndrea2020RobustInvariance}
A.~Bisoffi, C.~De~Persis, and P.~Tesi, ``Controller design for robust
  invariance from noisy data,'' {\em arXiv preprint arXiv:2007.13181}, 2020.

\bibitem{vanWaardeHenk2020MatrixSLemma}
H.~J. van Waarde, M.~K. Camlibel, and M.~Mesbahi, ``From noisy data to feedback
  controllers: {N}on-conservative design via a matrix {S}-lemma,'' {\em IEEE
  Transactions on Automatic Control}, pp.~1--1, 2020.

\bibitem{DePersis2020LowComplexity}
C.~De~Persis and P.~Tesi, ``Low-complexity learning of linear quadratic
  regulators from noisy data,'' {\em arXiv preprint arXiv:2005.01082}, 2020.

\bibitem{CoulsonJeremy2019ECCDeePC}
J.~Coulson, J.~Lygeros, and F.~Dörfler, ``Data-enabled predictive control: in
  the shallows of the {DeePC},'' in {\em Proceedings of the 18th European
  Control Conference}, (Naples, Italy), pp.~307--312, IEEE, 2019.

\bibitem{BerberichJulian2020DeeMPCStability}
J.~Berberich, J.~Köhler, M.~A. Muller, and F.~Allgower, ``Data-driven model
  predictive control with stability and robustness guarantees,'' {\em IEEE
  Transactions on Automatic Control}, pp.~1--1, 2020.

\bibitem{CoulsonJeremy2020DistributionallyRobustDeePC}
J.~Coulson, J.~Lygeros, and F.~Dörfler, ``Distributionally robust chance
  constrained data-enabled predictive control,'' {\em arXiv preprint
  arXiv:2006.01702}, 2020.

\bibitem{alpago2020extended}
D.~Alpago, F.~D{\"o}rfler, and J.~Lygeros, ``An extended kalman filter for
  data-enabled predictive control,'' {\em IEEE Control Systems Letters},
  vol.~4, no.~4, pp.~994--999, 2020.

\bibitem{WillemsJan2005SCL}
J.~C. Willems, P.~Rapisarda, I.~Markovsky, and B.~L. De~Moor, ``A note on
  persistency of excitation,'' {\em Systems \& Control Letters}, vol.~54,
  no.~4, pp.~325--329, 2005.

\bibitem{MarkovskyIvan2008DataDrivSimCont}
I.~Markovsky and P.~Rapisarda, ``Data-driven simulation and control,'' {\em
  International Journal of Control}, vol.~81, no.~12, pp.~1946--1959, 2008.

\bibitem{RN11435}
D.~Kastsiukevich and N.~Dmitruk, ``Data-driven optimal control of linear
  time-invariant systems,'' in {\em Proceedings of the 2020 IFAC World
  Congress}, (Berlin, Germany), 2020.

\bibitem{polik2007survey}
I.~P{\'o}lik and T.~Terlaky, ``A survey of the {S}-lemma,'' {\em SIAM review},
  vol.~49, no.~3, pp.~371--418, 2007.

\bibitem{RN11384}
L.~Xu, M.~S. Turan, B.~Guo, and G.~Ferrari-Trecate, ``Non-conservative design
  of robust tracking controllers based on input-output data,'' {\em arXiv
  preprint arXiv:2101.00488}, 2021.

\bibitem{RN11380}
M.~Moonen, B.~De~Moor, L.~Vandenberghe, and J.~Vandewalle, ``On-and off-line
  identification of linear state-space models,'' {\em International Journal of
  Control}, vol.~49, no.~1, pp.~219--232, 1989.

\bibitem{boyd1994linear}
S.~Boyd, L.~El~Ghaoui, E.~Feron, and V.~Balakrishnan, {\em Linear matrix
  inequalities in system and control theory}.
\newblock Philadelphia, PA, USA: SIAM, 1994.

\bibitem{Lofberg2004}
J.~L{\"{o}}fberg, ``Yalmip : A toolbox for modeling and optimization in
  {MATLAB},'' in {\em In Proceedings of the CACSD Conference}, (Taipei,
  Taiwan), pp.~284--289, 2004.

\bibitem{mosek}
{MOSEK ApS}, {\em The {MOSEK} optimization toolbox for {MATLAB} manual. Version
  9.2.29}, 2020.

\end{thebibliography}
\end{document}